\newcommand\N{\mathcal{N}}
\newcommand\ZZ{\mathbb{Z}}
\newcommand\RR{\mathbb{R}}
\newcommand\CC{\mathbb{C}}
\newcommand\PP{\mathbb{P}}
\newcommand\C{\mathcal{C}}
\newcommand\DD{\mathcal{D}}
\newcommand\OO{\mathcal{O}}
\newcommand\HH{\mathcal{H}}
\newcommand\GG{\mathcal{G}}
\newcommand\ov{\overline}
\newcommand\wt{\widetilde}
\newcommand\FF{\mathcal{F}}
\newcommand\LL{\mathcal{L}}
\newcommand\TT{\mathcal{T}}
\newcommand\UU{\mathcal{U}}
\newcommand\X{\mathfrak{X}}
\newcommand\ii{\imath}
\DeclareMathOperator{\codim}{codim}
\DeclareMathOperator{\eff} {Eff}
\DeclareMathOperator{\Hom}{Hom}
\theoremstyle{plain} 
\newcounter{Theorem}
\newtheorem{nteo}{Theorem}
\newtheorem*{ncorollary}{Corollary}
\newtheorem*{nobs}{Remark}
\newtheorem{teo}[Theorem]{Theorem} 
\newtheorem{prop}[Theorem]{Proposition}
\newtheorem{lema}[Theorem]{Lemma}
\newtheorem{corollary}[Theorem]{Corollary}
\theoremstyle{definition}
\newtheorem{deff}[Theorem]{Definition}
\newtheorem{obs}[Theorem]{Remark}
\newtheorem{ej}[Theorem]{Example}
\newenvironment{dem}
	{\noindent\textit{Proof.}}{\qed \vspace{.5cm}}
\title{Toric foliations with split tangent sheaf}
\author{Sebastián Velazquez $^1$}
\date{ }
\begin{document}

\maketitle

\begin{abstract}
 We study holomorphic foliations of arbitrary codimension in smooth complete toric varieties. We show that split foliations are stable if some good behaviour of their singular set is provided. As an application of these results, we exhibit irreducible components of the space of foliations that arise as pullbacks of some special $T$-invariant subvarieties.
\end{abstract}

\section{Introduction}
The problem of understanding the geometry of moduli spaces of foliations in projective spaces first appeared in the work of J. P. Jouanolou. Ever since, this problem has remained a very active area of research. For more general classes of varieties, however, this topic has received considerably less attention.
This article deals with singular foliations in smooth complete toric varieties. More specifically, we will address the problem of understanding the geometry of the moduli space of codimension $q$ singular foliations $\FF_q(X,\LL)$ by studying the family of foliations whose tangent sheaves split as a sum of line bundles.

Toric varieties have proven to be a rich class of examples in algebraic geometry. This is mainly due to the fact that its geometry is encoded in a combinatorial object, namely its \textit{fan}. Moreover, the appropriate use of Cox coordinates makes calculations on these varieties even more feasible. 

This work was mainly motivated by \cite{CP}. There, the authors show that the set of foliations with split tangent sheaf has non trivial interior in $\FF_q(\PP^n,d)$ by means of giving sufficient conditions for a foliation to belong to this particular set. We will use the notation $S(\DD)$ and $K(\DD)$ for the singular set and the Kupka set of a distribution $\DD$ on $X$ respectively (see Section 2.3 for a precise definition on these objects).
In Section 3.1 we will use the same ideas in order to prove the natural generalizations of these results to the toric setting, namely:

\begin{nteo}\label{stab1intro} Let $X$ be a complete toric variety of dimension $n\geq 3$ and $\alpha_1,\dots\alpha_{n-1}\in Pic(X)$ such that $h^1(X,\OO_X(-\alpha_i))=0$. Then for every $\FF\in \FF_1(X, \sum \alpha_i - \omega_X)$ 
satisfying $codim(S(\FF)\setminus \ov{K(\FF)})\geq 3$ and 
\[ \mathcal{T}\FF \simeq \bigoplus_{i=1}^{n-1}\OO_{X}(\alpha_i)\]
there exists a Zariski open set $\UU\subseteq \FF_1(X, \sum \alpha_i - \omega_X)$ containing $\FF$ such that
$\TT\FF'\simeq \TT\FF$ for every $\FF' \in \UU$.
\end{nteo}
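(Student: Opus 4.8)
The plan is to realize split foliations as images of a parametrization by tuples of vector fields and to prove stability by showing this parametrization is submersive at $\FF$. Write $E=\TT\FF\simeq\bigoplus_{i=1}^{n-1}\OO_X(\alpha_i)$ and let $N_\FF=TX/E$ be the normal sheaf. The splitting of $E$ means that the inclusion $E\hookrightarrow TX$ is given by global vector fields $v_i\in H^0\big(X,TX(-\alpha_i)\big)=\Hom(\OO_X(\alpha_i),TX)$, and $\FF$ is recovered as the involutive distribution they span. I would first set up the morphism
\[
\Psi:\prod_{i=1}^{n-1}H^0\big(X,TX(-\alpha_i)\big)\dashrightarrow \FF_1\Big(X,\textstyle\sum\alpha_i+\omega_X\Big),\qquad (v_i)\longmapsto v_1\wedge\cdots\wedge v_{n-1},
\]
defined on the locus of involutive, generically independent tuples, whose image is exactly the set of split foliations with tangent sheaf isomorphic to $E$. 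Since this image is constructible and contains $\FF$, it suffices to prove that $\Psi$ is a submersion at the tuple $(v_i)$: then the image contains a Zariski open neighbourhood $\UU$ of $\FF$, and every $\FF'\in\UU$ satisfies $\TT\FF'\simeq\bigoplus_i\OO_X(\alpha_i)\simeq\TT\FF$. One works modulo the reparametrization action of $\mathrm{Aut}(E)$ on tuples, and the genericity of independence is an open condition, so neither causes trouble.

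Second, I would identify the differential of $\Psi$ with a natural cohomological map. Differentiating $v_1\wedge\cdots\wedge v_{n-1}$ and projecting to the normal sheaf, $d\Psi$ becomes the map
\[
\bigoplus_{i=1}^{n-1}H^0\big(X,TX(-\alpha_i)\big)\ \longrightarrow\ \bigoplus_{i=1}^{n-1}H^0\big(X,N_\FF(-\alpha_i)\big)=H^0\big(X,\mathcal{H}om(E,N_\FF)\big)
\]
induced by the quotient $TX\twoheadrightarrow N_\FF$, and the Zariski tangent space $T_\FF\FF_1$ is the subspace of $H^0(X,\mathcal{H}om(E,N_\FF))$ cut out by the linearized integrability condition. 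Thus the statement reduces to showing that every integrable first-order deformation lifts to a deformation of the vector fields $v_i$; by the long exact sequence of $0\to E(-\alpha_i)\to TX(-\alpha_i)\to N_\FF(-\alpha_i)\to 0$ the obstruction to lifting lives in $\bigoplus_i H^1\big(X,E(-\alpha_i)\big)=\bigoplus_{i,j}H^1\big(X,\OO_X(\alpha_j-\alpha_i)\big)$.

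Third --- and this is the heart of the matter --- I would show that for an integrable class this obstruction actually vanishes, by localizing it on $S(\FF)$. On $U=X\setminus S(\FF)$ the sequence $0\to E\to TX\to N_\FF\to 0$ is a sequence of bundles and the distribution is a genuine splitting of $TX|_U$, so integrable deformations lift freely over $U$; hence the obstruction class maps to $0$ in $H^1(U,-)$ and, by the local cohomology sequence, comes from $H^1_{S(\FF)}$. I would then split $S(\FF)$ into its Kupka part and its complement. Away from $\ov{K(\FF)}$ the bad set has $\codim\geq 3$ and, the relevant sheaves being locally free, the vanishing $H^j_{Z}=0$ for $j<\codim Z$ kills the contribution in the needed degrees (the $H^1$ for the distribution and the $H^2$ forced by integrability, which is exactly why codimension $3$ and not $2$ is required). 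Along the Kupka set I would invoke the local product structure: by Kupka's theorem $\FF$ is, near each point of $K(\FF)$, the pullback of a foliation on $(\CC^2,0)$ by a submersion, and this normal form is stable under deformation; the resulting Poincaré-type triviality in the transverse directions makes the local contribution to $H^1_{S(\FF)}$ vanish there as well. Combining the two, the total obstruction is zero, $d\Psi$ is surjective onto $T_\FF\FF_1$, and stability follows.

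The main obstacle I anticipate is precisely the Kupka analysis together with the passage from distributions to foliations: one must check that the linearized integrability condition interacts correctly with the localization --- so that only classes supported on $S(\FF)$ survive --- and that the Kupka normal form genuinely forces the local $H^1_{S(\FF)}$ term to vanish rather than merely controlling it. This is the step where the codimension-$1$ case genuinely differs from the distribution case of the previous theorem, since there $\codim S(\DD)\geq 3$ can be imposed outright, whereas here the unavoidable codimension-$2$ part of the singular set must be absorbed by the Kupka hypothesis.
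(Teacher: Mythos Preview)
Your route is genuinely different from the paper's, and its central step has a real gap. The paper does not argue cohomologically. Working in Cox coordinates on $\CC^{n+s}$, it writes $\omega=\ii_{\X}\ii_R\Omega$ with $\X_1,\dots,\X_{n-1}$ homogeneous vector fields and $R_1,\dots,R_s$ the Euler fields, and uses an earlier lemma to normalize so that $d\omega=\ii_\X\ii_{R'}\Omega$ is again decomposable. For a tangent vector $\eta$ to $\FF_1$ at $[\omega]$ one has $d\omega\wedge d\eta=0$; the paper then invokes the division-type Lemma~3.1 of Cukierman--Pereira, whose hypothesis is precisely a codimension bound on the zero locus of the decomposable form $d\omega$ --- this is exactly where $\codim(S(\FF)\setminus\ov{K(\FF)})\geq 3$ enters. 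That lemma yields $d\eta=\sum_i\ii_{Y_i}\ii_{\widehat{\X}_i}\ii_{R'}\Omega+\sum_t\ii_{Z_t}\ii_\X\ii_{\widehat{R}_t'}\Omega$ with $\deg Y_i=\deg \X_i$; contracting with $R_1$ then recovers $\eta$, up to a scalar multiple of $\omega$, as $\sum_i\ii_{Y_i}\ii_{\widehat{\X}_i}\ii_R\Omega$, which is visibly in the image of $d\Phi$. So the Kupka hypothesis is consumed by an algebraic division lemma in affine coordinates, not by any local-cohomology or normal-form argument.

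The gap in your argument is the sentence ``the distribution is a genuine splitting of $TX|_U$, so integrable deformations lift freely over $U$; hence the obstruction class maps to $0$ in $H^1(U,-)$.'' Exactness of $0\to E\to TX\to N_\FF\to 0$ as a sequence of \emph{bundles} on $U$ does not mean the sequence splits on $U$, and in general $H^1\big(U,E(-\alpha_i)\big)\neq 0$, so the connecting map $H^0\big(U,N_\FF(-\alpha_i)\big)\to H^1\big(U,E(-\alpha_i)\big)$ can be nonzero. You supply no mechanism by which integrability of $\eta$ produces an actual holomorphic lift of each $\bar w_i$ to $H^0\big(U,TX(-\alpha_i)\big)$; Frobenius only gives local lifts, and these need not glue. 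Once this step fails, the obstruction class need not come from $H^1_{S(\FF)}$, and the localization strategy collapses. Your Kupka step is likewise only a heuristic: the local product form controls analytic germs of deformations, but you would still have to turn that into the vanishing of a specific class in $H^1_{\ov{K(\FF)}}$ of a locally free sheaf along a codimension-$2$ set, which depth alone does not give. The paper's explicit computation avoids both problems by never leaving the level of polynomial identities for $d\eta$; if you want a sheaf-theoretic substitute, the remark following the paper's proof points instead to Quallbrunn's duality argument together with Demazure vanishing, which is a different (and complete) cohomological route.
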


\begin{nteo} \label{stab2intro} Let $q\geq 2$ be an integer, $X$ a complete toric variety of dimension $n\geq 3$ and $\alpha_1,\dots\alpha_{n-q}\in Pic(X)$ such that $h^1(X,\OO_X(-\alpha_i))=0$. Then for every distribution $\DD\in \DD_q(X, \sum \alpha_i - \omega_X)$ 
satisfying $\codim(S(\DD))\geq 3$ and 
\[ \mathcal{T}\DD \simeq \bigoplus_{i=1}^{n-q}\OO_{X}(\alpha_i)\]
there exists a Zariski open set $\UU\subseteq \DD_q(X, \sum \alpha_i - \omega_X)$ containing $\DD$ such that
$\TT\DD'\simeq \TT\DD$ for every $\DD' \in \UU$.
\end{nteo}

\begin{nobs} The hypothesis on the vanishing of the cohomology groups in the theorems above can be replaced by some weaker hypothesis, see Remark \ref{obs} at the end of Section 3.1.
\end{nobs}

Recall that every $n$-dimensional toric variety $X$ admits a finite number of $T$-invariant irreducible hypersurfaces $D_1,\dots,D_{n+s}$ (indexed by the 1-dimensional cones in its fan).  We will say that $D_j$ is \textit{maximal} if it satisfies $\dim H^0(X,\OO(D_i-D_j)) =0$
for every $D_j\not\sim D_i$. We will see that such elements always exist. If $\{ Di\}_{i\in S}$ is a collection of $T$-divisors with non-trivial intersection, then $D_S:=\cap_{i\in S} D_i$ is again a smooth complete toric variety.
As an application of the theorems above, we will show that the set of foliations which arise as linear pullbacks of foliations in some of these intersections fills out irreducible components of the corresponding moduli space of foliations in $X$. More precisely, in Section 3.2 we will prove:

\begin{ncorollary} Let $X$ be a complete toric variety of dimension $n\geq 3$ and $\{D_i\}_{i\in S}$ a set of maximal elements such that
$\dim (D_S) \geq 2$. Let $\beta\in Pic(D_S)$ and $\mathcal{C}\subseteq \FF_q(D_S, \beta )$ an irreducible component. Let $\alpha\in Pic(X)$ be the pullback of $\beta$ by a generic linear projection. If the generic element of $\mathcal{C}$ satisfies the hypotheses of Theorem \ref{stab1intro} (for $q=1$) or Theorem \ref{stab2intro} (for $q>1$)
then there exists an irreducible component of $\FF_q(X,\alpha)$ such that its generic element is a linear pullback of an element of $\C$. 
\end{ncorollary}

\vspace{1cm}

\textbf{Acknowledgements.} The author is very grateful to Fernando Cukierman, Jorge Vit\'orio Pereira, Federico Quallbrunn, César Massri and the anonymous referee for their useful comments and suggestions.

\section{Preliminaries}

\subsection{Toric varieties and Euler sequences}

In this section we discuss all the facts concerning toric varieties that will be used 
afterwards. For more details see \cite{CLS}. We will follow the notation used
in \cite{CLS}. 

Let $X=X_\Sigma$ be the toric variety associated to a fan $\Sigma$ in $\RR^n$ and $M:=Hom_\ZZ(T,\CC^*)$ the character lattice of its torus $T$. We will assume that $X$ is smooth and complete 
(or equivalently, that $\Sigma$ covers $\RR^n$ and the set of rays in every cone can be extended to a basis of $\ZZ^n$). Recall that the subgroup $Div_T(X)$ of divisors 
that are fixed by the torus action is freely generated by the elements $D_i$ associated to the rays of $\Sigma$, i.e., it is isomorphic to $\ZZ^{n+s}$, where $n+s$ is the number of rays 
in $\Sigma$.

The morphism $M\to Div_T(X)$ sending $m\mapsto div(\chi^m)$ together with the restriction of the quotient map $Div_T(X)\to Pic(X)$ fit together in the exact sequence
\[ 0\longrightarrow M\longrightarrow Div_T(X) \longrightarrow Pic(X)\longrightarrow 0. \]
This is the very basis of the construction of homogeneous coordinates: if we denote $G=Hom(Pic(X), \CC^*)$ then applying the functor $Hom(-,\CC^*)$  we get 
\[ 1\longrightarrow G \longrightarrow (\CC^*)^{n+s} \longrightarrow T \longrightarrow 1,\]
which is also exact. With this in mind, we can think of $T$ as the quotient of $(\CC^*)^{n+s}$ by the subgroup $G$. The construction of homogeneous coordinates in the sense of \cite{Cox} is just an extension of this presentation, i.e., a good geometric quotient $\pi: \CC^{n+s}\setminus Z \to X$ such that the diagram
\[
\xymatrix{
1 \ar[r]& G\ar[r]\ar@{-}[d]& (\CC^*)^{n+s}\ar[r]\ar@{^{(}->}[d] & T \ar[r]\ar@{^{(}->}[d] & 1   \\
& G \ar[r] & \CC^{n+s}\setminus Z   \ar[r]_\pi  & X_\Sigma \\
}
\]
commutes. 
\begin{obs} \label{fijariso} Since $X$ is smooth and has a point that is fixed by the torus action (or equivalently, $\Sigma$ has a cone of dimension $n$) we can assure that $Pic(X)$ is free. \textit{We will now fix an isomorphism} $Pic(X)\simeq \ZZ^s$. This choice also induces a canonical isomorphism $G=Hom(Pic(X),\ZZ)\simeq (\CC^*)^s$.
\end{obs}
The details of the construction of such quotient will not be explained here. The reader that is not familiarized with these ideas is referred to [Chapter 5, \cite{CLS}]. However, one cannot fail to mention that $Z$ is just a union of linear coordinate subspaces  and satisfies $\codim(Z)\geq 2$. This last fact tells us that the coordinate ring of $\CC^{n+s}\setminus Z$ is the polynomial ring $\CC[x_1,\dots,x_{n+s}]$. 

Being $Pic(X)$ a free abelian group, the natural evaluation map $ev:Pic(X)\to Hom(G,\CC^*)$ from the Picard group of $X$ into the character lattice of $G$ is an isomorphism. We will use the notation $\chi^\alpha$ to denote the character induced by $\alpha\in Pic(X)$. If we look closely at the diagrams above we can deduce that the action of $G$ on $\CC^{n+s}\setminus Z$ is given by 
\[ g\cdot (p_1,\dots, p_{n+s}) = (\chi^{[D_1]}(g)p_1,\dots, \chi^{[D_{n+s}]}(g)p_{n+s}),\]
where $g\in G$ and $[D_i]\in Pic(X)$ is the class of the $T$-invariant divisor $D_i$. 
Having fixed the isomorphism in Remark \ref{fijariso}, we can describe this action more concretely:  we can replace the embedding $G\longrightarrow (\CC^*)^{n+s}$ by a group homomorphism $(\CC^*)^s\longrightarrow (\CC^*)^{n+s}$, so we can think of the image of this morphism acting on $\CC^{n+s}\setminus Z$ by diagonal matrices:
\[ (t_1,\dots, t_s)\cdot (p_1,\dots, p_{n+s}) = (t_1^{a^1_1}\dots t_s^{a^s_1}p_1,\dots, t_1^{a^1_{n+s}}\dots t_s^{a^s_{n+s}}p_{n+s}),\]
where $(a_i^1,\dots,a_i^s)=[D_i]\in Pic(X)$.

At the level of coordinate rings, this action can be simultaneously diagonalized, i.e., we get a decomposition 
\[ \CC[x_1,\dots,x_{n+s}]=\bigoplus_{\alpha \in Pic(X)} S_\alpha ,\]
where $S_\alpha=\{ f \in \CC[x_1,\dots.x_{n+s}] | f(g\cdot x)=\chi^{\alpha}(g)f(x) \}$. The ring $\CC[x_1,\dots,x_{n+s}]$ equipped with this grading is the \textit{Cox ring} of $X$. A good feature of this grading is that there are natural isomorphisms
\[ H^0(X,\LL)\simeq S_\LL .\] 
In particular, we have $S_0=H^0(X,\OO_X)=\CC$ since $X$ is complete.
\begin{obs}It is actually easy to calculate the degree of an element since the coordinate functions satisfy $\deg(x_i)=[D_i]$.
\end{obs}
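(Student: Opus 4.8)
The plan is to read the degree directly off the definition of the $Pic(X)$-grading. Recall that $S_\alpha$ consists of those $f\in \CC[x_1,\dots,x_{n+s}]$ satisfying $f(g\cdot x)=\chi^\alpha(g)f(x)$ for all $g\in G$, so computing $\deg(x_i)$ amounts to identifying the unique character by which the $G$-action rescales the coordinate function $x_i$. First I would observe that every monomial is automatically homogeneous for this action; this is precisely what makes the displayed decomposition $\CC[x_1,\dots,x_{n+s}]=\bigoplus_\alpha S_\alpha$ a genuine grading, and it reduces the whole statement to a single evaluation.

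Concretely, using the explicit description of the action, the $i$-th coordinate of $g\cdot(p_1,\dots,p_{n+s})$ is $\chi^{[D_i]}(g)\,p_i$, whence
\[ x_i(g\cdot x)=\chi^{[D_i]}(g)\,p_i=\chi^{[D_i]}(g)\,x_i(x). \]
Comparing with the relation defining $S_\alpha$ gives $x_i\in S_{[D_i]}$, that is, $\deg(x_i)=[D_i]$. There is no genuine obstacle here, as the identity is immediate from the construction of homogeneous coordinates; but it is worth recording that, since characters are multiplicative, the same computation applied to an arbitrary monomial $x^a=x_1^{a_1}\cdots x_{n+s}^{a_{n+s}}$ yields
\[ x^a(g\cdot x)=\prod_{i}\bigl(\chi^{[D_i]}(g)\bigr)^{a_i}\,x^a(x)=\chi^{\sum_i a_i[D_i]}(g)\,x^a(x), \]
so $\deg(x^a)=\sum_i a_i[D_i]$. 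This both recovers the asserted degree of each $x_i$ and exhibits the decomposition $\CC[x_1,\dots,x_{n+s}]=\bigoplus_\alpha S_\alpha$ explicitly as the grouping of monomials by degree, which is what justifies calling the grading easy to compute in practice.
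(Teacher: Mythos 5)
Your proposal is correct and follows exactly the route implicit in the paper: the remark is stated without proof precisely because it is an immediate evaluation of the $G$-action $g\cdot p=(\chi^{[D_1]}(g)p_1,\dots,\chi^{[D_{n+s}]}(g)p_{n+s})$ against the defining condition $f(g\cdot x)=\chi^\alpha(g)f(x)$ of $S_\alpha$, which is what you do. Your extension to arbitrary monomials, giving $\deg(x^a)=\sum_i a_i[D_i]$, is a harmless and accurate elaboration of the same computation.
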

 
With respect to the theory of foliations, the main advantage of having homogeneous coordinates is the \textit{generalized Euler sequence}
\begin{equation} \label{Om}0 \longrightarrow \Omega^1_{X} \longrightarrow \bigoplus_{i=1}^{n+s} \OO_{X}(-D_i) \longrightarrow Pic(X)\otimes_\ZZ \OO_{X} \longrightarrow 0 
\end{equation}
and its dual
\begin{equation} \label{TX}0\longrightarrow \OO_X^{\oplus s} \longrightarrow \bigoplus_{i=1}^{n+s}\OO_X(D_i) \longrightarrow \TT X \longrightarrow 0,
\end{equation}
where we are using the isomorphism $Pic(X)\simeq \ZZ^s$ of Remark \ref{fijariso} in order to get $Pic(X)\otimes_\ZZ \OO_X\simeq \OO_X^{\oplus s}$.
The first arrow in the second sequence corresponds to the \textit{radial} vector fields
\[ R_t = \sum_{i=1}^{n+s}a_i^t x_i \frac{\partial}{\partial x_i} \hspace{0.2cm} 1\leq t\leq s.\]
Observe that the coefficients in the radial vector fields depend on the choice of the isomorphism $Pic(X)\simeq \ZZ^s$ made in Remark \ref{fijariso}.
Let $\alpha\in Pic(X)$ and $\omega\in H^0(X,\Omega^1_X(\alpha))$. We will say that $\omega$ is a twisted differential form \textit{of degree} $\alpha$ and denote it $\deg(\omega)=\alpha$. As in the projective case, we can use the Euler sequence in order to get a description of $\omega$ in homogeneous coordinates of the form 
\[ \omega=\sum_{i=1}^{n+s} A_i(x) dx_i, \]
where $A_i\in S_{\alpha-[D_i]}$ and $\ii_{R_t}\left(\sum A_i dx_i\right)=0$ for every $1\leq t\leq s$. In the same spirit, every twisted differential $q$-form of degree $\alpha$ can be described in homogeneous coordinates by an homogeneous differential form of the form 
$$\sum_{|I|=q} A_I dx_I,$$
where $A_I\in S_{\alpha-\sum_{i\in I}[D_i]}$ and whose contraction by every radial vector field $R_t$ is zero.
Analogously, for $f\in S_\alpha$ the degree of the affine vector field $Y=f \frac{\partial }{\partial x_j}$ is $\deg(Y)=\alpha-\deg(x_j)$. With this in mind, one can read the first sequence in the following way: the differential forms in $X$ are homogeneous polynomial  
forms $\omega$ satisfying $\ii_{R_t}\omega=0$ for each radial vector field. 

In the case of twisted vector fields on the other hand if we tensor (2) by $\LL\in  Pic(X)$ and take cohomology we get the exact sequence 
\begin{align*} 0\rightarrow H^0(X,\LL))^{\oplus s} \rightarrow&\bigoplus_{i=1}^{n+s} H^0(X, \LL\otimes\OO_X(D_i)) \xrightarrow{\rho_\LL } H^0(X,\TT X(\LL))\rightarrow    \\
&\rightarrow H^1(X,\LL)^{\oplus s}    \rightarrow \bigoplus_{i=1}^{n+s} H^1(X, \LL\otimes\OO_X(D_i)) \rightarrow \cdots
\end{align*}

\begin{obs}\label{poli}The image of the morphism
\[\rho_\LL:\bigoplus_{i=1}^{n+s}H^0(X, \LL\otimes\OO_X(D_i))\to H^0(X,\TT X(\LL)) \]
above consists exactly of the elements $Y\in H^0(X,\TT X(\LL))$ that can be described in homogeneous coordinates in the form $Y=\sum_{i=1}^{n+s} g_i \frac{\partial}{\partial x_i}$, where $g_i$ is a polynomial of degree $\deg(g_i)=\LL+[D_i]$. This description is unique up to linear combinations of the radial vector fields.
 We will restrict ourselves to these elements. Of course, if $H^1(X,\LL)=0$ then every global section of $\TT X(\LL)$ is of this form. This is not too restrictive, as Demazure and Batyrev-Borisov Vanishing Theorems (Theorem 9.2.3 and Theorem 9.2.7 in \cite{CLS}, respectively) tell us.
\end{obs}

For a similar treatment of these topics (in the case of $1$-dimensional foliations) see \cite{C}.

\begin{ej} $\PP^n=\PP^n(\CC)$. One can think of the classical projective space as the toric variety associated to a complete fan $\Sigma$ in $\RR^n$ with set of rays $\Sigma(1)=\{e_1,\dots,e_n,-e_1-\dots-e_n \}$. If we apply the above construction to this case we get the classical presentation $\PP^n\simeq \CC^{n+1}\setminus\{0\} /\CC^*$. 
\end{ej}

\begin{ej} \label{blowup}Let $X=Bl_p(\PP^n)$ be the blow-up of the usual projective space at a point, say $p=[0:\dots:0:1]$. Toric geometry provides a natural way of blowing up $T$-invariant subvarieties, namely the 
\textit{star subdivision} of the corresponding cone (see Chapter 3, \cite{CLS}). By these means, we get a geometric quotient
\[Bl_p(\PP^n) \simeq \CC^{n+2}\setminus Z /(\CC^*)^2, \]
where $(t_1,t_2)\cdot p= (t_1 p_1,\dots,t_1 p_{n},t_2 p_{n+1}, t_1 t_2 p_{n+2})$ and $Z$ is the union of the linear varieties $V(x_{n+1},x_{n+2})$ and $ V(x_1,\dots,x_n)$. Also, 
the set of $T$-invariant divisors consists of the exceptional divisor $D_{n+1}$ and the closure of the usual hyperplanes in $\PP^n$. Its Picard group is therefore isomorphic to $\ZZ^2$ and the 
grading can be defined by 
$\deg(x_i) =(1,0)$ for every $1\leq i \leq n$, $\deg(x_{n+1})=(0,1)$ and $\deg(x_{n+2})=(1,1)$.

\end{ej}

\begin{ej} The Hirzebruch surface $\HH_r$ is the toric variety defined by the complete fan with rays $\Sigma(1)=\{e_1, e_2,-e_2,-e_1+r e_2 \}$. In this case, the corresponding quotient is
\[ \HH_r\simeq \CC^4\setminus Z / (\CC^*)^2 ,\]
where $Z=V(x_1,x_4)\cup V(x_2,x_3)$ and the action of $(\CC^*)^2$ is defined by $(t_1,t_2)\cdot p = (t_1 p_1, t_2 p_2, t_1^rt_2 p_3, t_1 p_4)$. As for the grading, its Picard group is isomorphic to $\ZZ^2$. Under an appropriate isomorphism, we have
\[ \deg(x_1)=(0,1), \deg(x_2)=(1,0), \deg(x_3)= (1,r) \hspace{0.1cm} \mbox{and} \hspace{0.1cm}\deg(x_4)=(0,1).\]
\end{ej}

\subsection{Preliminaries on differential forms}

In this section we will prove the facts regarding differential forms that will be needed for the following proofs. Along the rest of the article we will use the following notation in order to make calculations more feasible: if $w_1,\dots,w_l$ are vector fields, then $w=w_1\wedge\dots\wedge w_l$ and  $\widehat{w}_i=w_1\wedge\dots\wedge w_{i-1} \wedge w_{i+1}\wedge \dots\wedge w_l$. The symbol $\Omega$ will stand for the $(n+s)$-form $dx_1\wedge\dots\wedge dx_{n+s}$. For a differential form $\omega$, we will denote $S(\omega)\subseteq \CC^{n+s}$ its zero-locus.

We will begin by stating a variant of Euler's formula that will be very useful for our calculations:
\begin{lema} \label{euler} Let $\omega$ be a twisted differential form on $X$ of degree $\deg(\omega)=(c_1,\dots,c_s)\in Pic(X)$. Then for every $1 \leq k \leq s$ we have
\[ \ii_{R_k}d\omega=c_k \omega. \]
\end{lema}
\begin{dem}
Before beginning the proof let us recall some facts about how the interior product relates to the Lie 
derivative. Let $Z$ be a vector field and $\eta$ a differential form. Cartan's 
identity states that 
\begin{equation}\label{Cartan} \LL_Z\eta= d\ii_Z \eta + \ii_Z d\eta.
\end{equation}
Since $\omega$ descends to $X$, applying the above equation for $Z=R_k$ and $\eta=\omega$ we get 
$$\ii_{R_k}d\omega=\LL_{R_k}\omega=\frac{\partial}{\partial t_k}\vert_{t=1} \chi^{\deg(\omega)}(t) \hspace{0.1cm}\omega.$$ 
The equality $c_k=\frac{\partial}{\partial t_k}\vert_{t=1} \chi^{\deg(\omega)}(t)$ follows from $\chi^{\deg(\omega)}(t)=\prod_{i=1}^s t_i^{c_i}$.  
\end{dem}

Recall that every affine differential $q$-form $\omega\in \Omega^q_{\CC^{n+s}}$ defines a map $\omega: \TT\CC^{n+s}\to \Omega^{q-1}_{\CC^{n+s}}$ such that for every local section $Z$ we have $\omega(Z)=\ii_Z\omega$.
The element $\omega$ is said to be \textit{integrable} if  $\ker(\omega)\subseteq \TT\CC^{n+s}$ is closed under the Lie bracket of vector fields.

\begin{lema} Let $\X_1,\dots,\X_{n+s-q}$ be polynomial vector fields in $\CC^{n+s}$. If $\omega=\ii_\X\Omega$ is integrable then there exist rational functions $f_1,\dots,f_{n+s-q}$ satisfying
\[ d\omega = \sum_{j=1}^{n+s-q} f_j \ii_{\widehat{\X_j}}\Omega \]
\end{lema}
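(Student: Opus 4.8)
The plan is to compute $d\omega$ explicitly and then use integrability to collapse the resulting expression into the stated form. Write $k=n+s-q$ for the number of contracted fields, so that $\omega=\ii_\X\Omega$ is a $q$-form and each $\ii_{\widehat{\X}_j}\Omega$ is a $(q+1)$-form. First I would record the elementary identities that govern the interaction of $\ii$, $d$ and $\LL$ against the volume form: Cartan's formula $\LL_{\X_j}=d\,\ii_{\X_j}+\ii_{\X_j}d$, the graded commutation rule $\LL_{\X_i}\ii_{\X_j}-\ii_{\X_j}\LL_{\X_i}=\ii_{[\X_i,\X_j]}$, together with $d\Omega=0$ and $\LL_{\X_j}\Omega=(\operatorname{div}\X_j)\,\Omega$. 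Since $\ii_{\X_i}$ already occurs among the factors defining $\omega$ and $\ii_{\X_i}^2=0$, one immediately obtains $\ii_{\X_i}\omega=0$ for every $i$; thus $\omega$ is a locally decomposable form annihilating the distribution $\DD=\langle \X_1,\dots,\X_k\rangle$, and it is the integrability of this distribution that is in play.

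Next I would establish, by induction on $k$, a structural identity of the shape
\[ d\omega=\sum_{j=1}^{k}\varepsilon_j\,(\operatorname{div}\X_j)\,\ii_{\widehat{\X}_j}\Omega+\sum_{1\le i<j\le k}\varepsilon_{ij}\,\ii_{[\X_i,\X_j]}\,\ii_{\X_1}\cdots\widehat{\ii}_{\X_i}\cdots\widehat{\ii}_{\X_j}\cdots\ii_{\X_k}\Omega, \]
with signs $\varepsilon_j,\varepsilon_{ij}\in\{+1,-1\}$ determined by the usual combinatorics (their precise values are irrelevant for the conclusion). The base case $k=1$ is just $d(\ii_{\X_1}\Omega)=\LL_{\X_1}\Omega=(\operatorname{div}\X_1)\,\Omega$. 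For the inductive step I would write $\omega=\ii_{\X_1}\bigl(\ii_{\X_2}\cdots\ii_{\X_k}\Omega\bigr)$, apply Cartan's formula to split $d\,\ii_{\X_1}=\LL_{\X_1}-\ii_{\X_1}d$, and then commute $\LL_{\X_1}$ past the remaining contractions using the commutation rule: the term where $\LL_{\X_1}$ reaches $\Omega$ contributes the $j=1$ divergence summand, the commutator terms contribute the bracket summands $[\X_1,\X_j]$, and the term $-\ii_{\X_1}d(\ii_{\X_2}\cdots\ii_{\X_k}\Omega)$ supplies the remaining summands by the inductive hypothesis.

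The final ingredient is integrability. Because $\omega$ is locally decomposable, the hypothesis that $\omega$ is integrable is equivalent, by Frobenius, to the involutivity of $\DD$; hence on the open set where $\X_1,\dots,\X_k$ are linearly independent one may write $[\X_i,\X_j]=\sum_{l=1}^{k} c^l_{ij}\,\X_l$ with coefficients $c^l_{ij}$ that are rational functions (obtained, e.g., by Cramer's rule). Substituting these expansions into the bracket summands and using $\ii_{\X_l}^2=0$, every term with $l\notin\{i,j\}$ vanishes because $\X_l$ is already among the contracted fields, while the two surviving terms $l=i$ and $l=j$ reinstate the omitted factor and yield, up to sign, $c^i_{ij}\,\ii_{\widehat{\X}_j}\Omega$ and $c^j_{ij}\,\ii_{\widehat{\X}_i}\Omega$. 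Collecting these together with the divergence terms exhibits $d\omega=\sum_{j=1}^{k} f_j\,\ii_{\widehat{\X}_j}\Omega$ with each $f_j$ a rational function, which is exactly the claim.

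I expect the main obstacle to be twofold. The error-prone part is the sign bookkeeping in the structural identity, although only its qualitative shape—divergence terms plus bracket terms—is needed, so I would not track signs beyond what is necessary. The conceptually delicate point is the passage from "integrability of $\omega$" to involutivity of $\DD$ together with the rationality of the structure functions $c^l_{ij}$: these functions are defined only where $\X_1,\dots,\X_k$ are independent, which is precisely why the conclusion must allow rational rather than polynomial coefficients.
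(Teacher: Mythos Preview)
Your proposal is correct and follows essentially the same route as the paper: Cartan's formula to split $d\,\ii_{\X_1}$, the commutator identity $[\LL_{\X_i},\ii_{\X_j}]=\ii_{[\X_i,\X_j]}$ to push the Lie derivative past the remaining contractions, $\LL_{\X_j}\Omega=(\operatorname{div}\X_j)\,\Omega$ for the final step, and integrability to express the brackets as rational combinations of the $\X_l$. The only cosmetic difference is that you package the computation as a clean induction on $k$ whereas the paper narrates it as moving $\LL_{\X_1}$ to the right and then recursing on $d\,\ii_{\X_2}\cdots\ii_{\X_k}\Omega$; your observation about why the coefficients are merely rational is also exactly the point.
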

\begin{dem}
If $Y$ and $Z$ are vector fields, we can describe the commutator of $\LL_Z$ and 
$\ii_Y$ with the identity
\begin{equation} \label{comm}\left[ \LL_Z, \ii_Y \right]=\ii_{\left[Z,Y\right]}. 
\end{equation}
Now suppose we have polynomial vector fields $\X_1,\dots, \X_{n+s-q}$ satisfying the hypothesis of the lemma. For every ordered subset $J=\{ j_1,\dots,j_r \}\subseteq \{ 1,\dots,n+s-q \}$ of size $r$ we will denote $\X_J=\X_{j_1}\wedge\dots\wedge\X_{j_r}$. We will now prove - by induction in $r$ - that for every such $J$ there exist rational functions $\{ f_I \}_{|I|=r-1}$ such that 
\[ d\ii_{\X_J}\Omega = \sum_{\substack{I\subseteq \{1,\dots,n+s-q\} \\ |I|=r-1 }} f_I \ii_{\X_I}\Omega.\] 
The lemma will follow by setting $J=\{ 1,\dots,n+s-q \}$. For a subset consisting of a single element, the assertion is trivial since every $(n+s)$-form is a multiple of $\Omega.$  
Let us suppose that the statement holds for every $k< r$ and let $J=\{ j_1\dots,j_r\}$ be some subset. Applying (\ref{Cartan}) for $Z=\X_{j_1}$ and $\eta=\ii_{\X_{j_2}}\dots\ii_{\X_{j_r}}\Omega$ we see that $d\ii_{\X_{j_1}}=\LL_{\X_{j_1}}-\ii_{\X_{j_1}}d$ and 
\begin{align*}
d\ii_{\X_J} &= \left( \LL_{\X_{j_1}} - \ii_{\X_{j_1}}d \right)\ii_{\X_{j_2}}\dots\ii_{\X_{j_r}}\Omega \\
&= \left( \LL_{\X_{j_1}}\ii_{\X_{j_2}} - \ii_{\X_{j_1}}d\ii_{\X_2} \right)\ii_{\X_{j_3}}\dots\ii_{\X_{j_r}}\Omega.
\end{align*}
Equation \ref{comm} tells us that $[\LL_{\X_{j_1}},\ii_{\X_{j_2}}]=\ii_{[{\X_{j_1}},{\X_{j_2}}]}$ and therefore
\[ ( \ii_{[{\X_{j_1}},\X_{j_2}]} + \ii_{\X_{j_2}}\LL_{\X_{j_1}} - \ii_{\X_{j_1}}d\ii_{\X_{j_2}}    )  \ii_{\X_{j_3}}\dots\ii_{\X_{j_r}}\Omega.  \]
Being $\omega=\ii_\X\Omega$ integrable, we have that $[\X_{j_1},\X_{j_2}]\in\ker(\omega)$. Without loss of generality, we can assume that $\{ \X_1,\dots,\X_{n+s-q}\}$ can be extended to a basis of the $\CC(x_1,\dots,x_{n+s})$-vector space of rational vector fields (otherwise $\omega=0$ and the assertion on $d\omega$ would be trivial). It follows by duality that there must exist some rational functions
$\beta_1,\dots, \beta_{n+s-q}$ such that $[\X_{j_1},\X_{j_2}]=\sum_{i=1}^{n+s-q} \beta_i \X_i$. This means that 
\begin{align*} d\ii_{\X_J} &= \left(\sum_{i=1}^{n+s-q}\beta_i \ii_{\X_i}+ \ii_{\X_{j_2}}\LL_{\X_{j_1}} - \ii_{\X_{j_1}}d\ii_{\X_{j_2}} \right) \ii_{\X_{j_3}}\dots\ii_{\X_{j_r}}\Omega  \\
&= \left(\sum_{i=1}^{n+s-q}\beta_i \ii_{\X_i}+ \ii_{\X_{j_2}}d\ii_{\X_{j_1}}+ \ii_{\X_{j_2}}\ii_{\X_{j_1}}d - \ii_{\X_{j_1}}d\ii_{\X_{j_2}} \right) \ii_{\X_{j_3}}\dots\ii_{\X_{j_r}}\Omega.
\end{align*}      
Applying the inductive hypothesis to the sets $\{j_1,j_3,\dots,j_r \}$, $\{j_2,\dots,j_r \}$ and $\{j_3,\dots,j_r \}$ we get the desired expression for $d\ii_{\X_J}$ and the lemma follows.
\end{dem}

\begin{lema}\label{diff} Let $\X_1,\dots,\X_{n-q}$ be homogeneous (with the grading of the Cox ring of $X$) vector fields in $\CC^{n+s}$ . 
If $\omega=\ii_\X\ii_R\Omega$ is integrable and satisfies $codim(S(\omega))\geq 2$, then there exist homogeneous vector fields  
$\wt{\X}_1,\dots,\wt{\X}_{n-q}$ satisfying $\deg (\wt{\X}_i ) =\deg(\X_i)$ and
\begin{enumerate}
  \item $\omega=\ii_{\wt{\X}}\ii_R\Omega$.
  \item $d\omega=\sum_{t=1}^s (-1)^{n-q+t-1} c_t \hspace{0.1cm} \ii_{\wt{\X}}\ii_{\widehat{R}_t}\Omega, $
\end{enumerate}
where $(c_1,\dots,c_s)=\deg(\omega)$.
\end{lema}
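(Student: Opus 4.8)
The plan is to apply the previous lemma to the enlarged system $\X_1,\dots,\X_{n-q},R_1,\dots,R_s$, which spans the distribution cut out by $\omega$ and is in involution precisely because $\omega$ is integrable. Writing $\omega=\ii_\X\ii_R\Omega$ and splitting the index in that lemma's conclusion according to whether one drops an $\X_i$ or a radial field $R_t$ (here $\ii_{\widehat R_t}$ denotes the block $\ii_R$ with the factor $\ii_{R_t}$ removed), I would first obtain an a priori decomposition
\[ d\omega=\sum_{i=1}^{n-q} g_i\,\ii_{\widehat{\X}_i}\ii_R\Omega+\sum_{t=1}^{s} h_t\,\ii_\X\ii_{\widehat R_t}\Omega \]
with $g_i,h_t$ rational functions. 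The task then splits into identifying the constants $h_t$ and absorbing the unwanted terms $g_i\,\ii_{\widehat{\X}_i}\ii_R\Omega$ by replacing the $\X_i$ with suitable $\wt{\X}_i$ of the same degree, without altering $\omega$.

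To pin down the $h_t$ I would note that $\ii_{R_t}\omega=0$, since $\ii_{R_t}$ already occurs inside $\ii_R$ and $\ii_{R_t}^2=0$; thus $\omega$ descends to a (twisted) form on $X$ and $\LL_{R_t}\omega=c_t\omega$ by homogeneity. Cartan's identity (\ref{Cartan}) then yields $\ii_{R_t}d\omega=\LL_{R_t}\omega=c_t\omega$. Contracting the displayed decomposition with $R_t$ annihilates every summand except the one already missing $R_t$, and a sign count gives $\ii_{R_t}(\ii_\X\ii_{\widehat R_t}\Omega)=(-1)^{n-q+t-1}\omega$, so that $h_t=(-1)^{n-q+t-1}c_t$, exactly the claimed coefficients.

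The crucial point for the second part is that each $g_i$ is in fact a homogeneous polynomial of degree $\deg(\X_i)$. By Cramer's rule the $g_i$ are regular wherever the $n+s-q$ forms obtained by dropping one field are linearly independent, which is exactly the complement of $S(\omega)$; since $\codim(S(\omega))\geq 2$ and $\CC^{n+s}$ is smooth, a Hartogs-type extension forces $g_i\in\CC[x_1,\dots,x_{n+s}]$, of the degree imposed by $\deg(d\omega)=\deg(\omega)$. I would then set $\wt{\X}_i=\X_i+\sum_t\lambda_{it}R_t$ with $\lambda_{it}$ homogeneous of degree $\deg(\X_i)$: adding radial multiples leaves $\omega$ unchanged (each extra $\ii_{R_t}$ cancels against the block $\ii_R$) and preserves the degree because the $R_t$ have degree $0$. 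A direct expansion shows $\ii_{\wt{\X}}\ii_{\widehat R_t}\Omega=\ii_\X\ii_{\widehat R_t}\Omega+(\text{a multiple of the }\ii_{\widehat{\X}_i}\ii_R\Omega)$, so the redefinition shifts the coefficient $g_i$ by a $\CC$-linear combination of the $\lambda_{it}$ weighted by the $c_t$. Choosing a $t_0$ with $c_{t_0}\neq 0$ and $\lambda_{it_0}$ a scalar multiple of $g_i$ then kills every $g_i$-term, leaving precisely $d\omega=\sum_t(-1)^{n-q+t-1}c_t\,\ii_{\wt{\X}}\ii_{\widehat R_t}\Omega$.

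The hard part, and the only place the hypothesis $\codim(S(\omega))\geq 2$ enters, is the polynomiality of the $g_i$ via normality; the rest is bookkeeping, the only real care being the two sign counts. One genuinely separate case is $\deg(\omega)=0$: then all $c_t=0$, no radial correction can modify $g_i$, and one must instead argue $g_i=0$ outright — for instance a holomorphic $q$-form on the complete (Moishezon) variety $X$ is closed, whence $d\omega=0$ and, by the linear independence of the $\ii_{\widehat{\X}_i}\ii_R\Omega$, all $g_i=0$, so that $\wt{\X}=\X$ already works.
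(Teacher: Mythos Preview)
Your argument is correct and follows essentially the same route as the paper: apply the previous lemma to the enlarged family $\X_1,\dots,\X_{n-q},R_1,\dots,R_s$, identify the radial coefficients via the Euler/Cartan relation $\ii_{R_t}d\omega=c_t\omega$, use $\codim(S(\omega))\geq 2$ to force the remaining coefficients to be polynomial, and absorb them by adding radial multiples to the $\X_i$. The only differences are cosmetic---the paper isolates $f_i$ by the cleaner contraction $\ii_{\X_i}d\omega=(-1)^{i+1}f_i\omega$ rather than Cramer, and it leaves the solvability condition ``some $c_t\neq 0$'' implicit, whereas you treat the degenerate case $\deg(\omega)=0$ explicitly.
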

\begin{dem} The previous lemma guarantees the existence of some rational functions  
$f_1,\dots,f_{n-q}, a_1,\dots,a_s$ such that
\[d\omega= \sum_{j=1}^{n-q} f_j \imath_{\widehat{\X}_j}\ii_R\Omega + \sum_{t=1}^s a_t \imath_{\X}\ii_{\widehat{R}_t}\Omega      .      \]
The polynomial differential form $ \ii_{\X_i}d\omega = (-1)^{i+1} f_i \omega $ is homogeneous of degree $\deg(\omega)+\deg(\X_i)$ and therefore $f_i$ is homogeneous of degree $\deg(\X_i)$. Let us write $f_i=\frac{h_i}{g_i}$ with $h_i$ and $g_i$ polynomials without common factors. Multiplying by $g_i$ the previous equation we get 
\[ g_i \ii_{\X_i}d\omega = (-1)^{i+1} h_i \omega,\]
and therefore $\omega$ must vanish along $\{g_i=0\}$. But then the hypothesis on $S(\omega)$ implies that $f_i$ is in fact a polynomial. On the other hand, by Lemma \ref{euler} we must have 
\[ \ii_{R_t} d\omega= c_t \omega =(-1)^{n-q+t-1} a_t \omega, \]
and therefore $a_t=(-1)^{n-q+t-1} c_t \in \ZZ$.
Now consider for every $b_1,\dots b_s \in \CC$ and $1\leq i \leq n-q$ the vector field 
\[ \wt{\X}_i = \X_i +(-1)^i f_i \sum_{t=1}^s b_t R_t.\]
Recall that the radial vector fields are of the form $R_t = \sum_{i=1}^{n+s}a_i^t x_i \frac{\partial}{\partial x_i}$ and therefore are homogeneous of degree zero. Hence it is clear that these new vector fields are homogeneous of the desired degree and satisfy $\omega=\ii_{\wt{\X}}\ii_R\Omega$. 

Let us now compute $d\omega$ in terms of these new vector fields. For every $1\leq t\leq s$ the element $\wt{\X}\wedge\widehat{R}_t$ equals
\begin{align*}  \X \wedge \widehat{R}_t + \sum_{j=1}^{n-q} (-1)^j f_j \X_1\wedge&\dots\wedge\X_{j-1}\wedge \sum_{k=1}^s b_k R_k \wedge \X_{j+1}\wedge\dots\wedge \X_{n-q}\wedge\widehat{R}_t \\
&= \X\wedge \widehat{R}_t + \sum_{j=1}^{n-q} (-1)^{n-q+t-1}b_t f_j \widehat{\X}_j \wedge R.
\end{align*}
But then 
\begin{align*}\sum_{t=1}^s a_t \ii_{\wt{\X}} \ii_{\widehat{R}_t}\Omega&= \sum_{t=1}^s a_t \ii_\X\ii_{\widehat{R}_t}\Omega + \sum_{t=1}^s a_t \sum_{j=1}^{n-q} (-1)^{n-q+t-1}b_t f_j \ii_{\widehat{\X}_j} \ii_R \Omega   \\
&=\sum_{t=1}^s a_t \ii_\X\ii_{\widehat{R}_t}\Omega + \left(\sum_{t=1}^s a_t (-1)^{n-q+t-1} b_t \right) \sum_{j=1}^{n-q}f_j \ii_{\widehat{\X}_j} \ii_R \Omega \\
&=\sum_{t=1}^s a_t \ii_\X\ii_{\widehat{R}_t}\Omega + \left(\sum_{t=1}^s c_t  b_t \right) \sum_{j=1}^{n-q}f_j \ii_{\widehat{\X}_j} \ii_R \Omega\\
&=d\omega + \left(-1+\sum_{t=1}^s c_t  b_t\right) \sum_{j=1}^{n-q}f_j \ii_{\widehat{\X}_j} \ii_R \Omega.
\end{align*}
Observe that since $\omega$ is non-zero (and $X$ is complete) we must have $\deg(\omega)\neq (0,\dots,0)$ and therefore we can choose the elements $b_t\in \CC$ satisfying the equation $\sum_{t=1}^s c_t b_t=1$ in order to get the desired expression for $d\omega$. 
\end{dem}

\subsection{Distributions and Foliations}
 
A \textit{singular holomorphic distribution of codimension q} in $X$ is a subsheaf $\TT\DD$ of $\TT X$ such that the quotient $\N_\DD:= \TT X / \TT\DD$ is a torsion-free sheaf of rank $q$.
A \textit{singular holomorphic foliation of codimension q} is a distribution $\FF$ closed under Lie bracket, i.e., $[\TT\FF,\TT\FF]\subseteq \TT\FF$. Following \cite{DM}, a singular distribution $\DD$ can be described as the kernel of a locally decomposable twisted differential form $\omega_\DD$, i.e., an element $\omega_D\in  H^0(X,\Omega^q_X(\alpha))$ satisfying $\codim(S(\omega_\DD))\geq 2$ and 
\[ \tag{1} \ii_v(\omega_\DD) \wedge \omega_\DD=0  \hspace{0.5cm} \forall v\in \bigwedge^{q-1}\mathcal{T}\CC^{n+s}. \]
We will call $\alpha$ the $\textit{degree}$ of such distribution. The \textit{singular set} of $\DD$ is defined as $S(\DD)=S(\omega_\DD)$.

In the same spirit, a twisted differential form $\omega\in   H^0(X,\Omega^q_X(\alpha))$ defines a foliation $\FF_\omega$ if and only if it satisfies $(1)$, $\codim(S(\omega))\geq 2$ and
 \[ \tag{2} \ii_v(\omega) \wedge d\omega=0  \hspace{0.5cm} \forall v\in \bigwedge^{q-1}\mathcal{T}\CC^{n+s} .\]

With the above definitions already settled, we can consider the spaces
\begin{align*} \DD_q(X,\alpha) &=\{ [\omega] \hspace{0.1cm} \vert \hspace{0.1cm} \omega \hbox{ satisfies (1) and } \codim(S(\omega))\geq 2 \}& \subseteq \PP H^0(X,\Omega^q_X(\alpha)), \\
\FF_q(X,\alpha) &= \{[\omega] \in \DD_q(X,\alpha) \hspace{0.1cm} \vert \hspace{0.1cm} \omega \hbox{ satisfies (2)} \}&
\end{align*}
whose points parametrize distributions and foliations of degree $\alpha$ respectively.

An element $x\in S(\DD)$ is called a \textit{Kupka point} if $d(\omega_\DD)(x)\neq 0$. We will denote by $K(\DD)$ the set of Kupka points of $\DD$. 

A codimension $q$ distribution $\DD$ is said to be \textit{split} if its tangent sheaf splits as a sum of line bundles, i.e.,
\[ \TT\DD \simeq \bigoplus_{i=1}^{n-q}\OO_X(\alpha_i). \]
Of course, split foliations are integrable split distributions.
Observe that each line bundle defines a morphism $\OO_X(\alpha_i)\to \TT X$ that can be represented by a twisted vector field 
$\X_i\in H^0(X,\TT X(-\alpha_i))$. It follows that every stalk $\TT \DD_x$ is the free $\OO_{X,x}$-module generated by $\{\mathfrak{X}_1,\dots,\X_{n-q}\}$. If these elements can be described by homogeneous (with respect to the grading of the Cox ring of $X$) polynomial vector fields, then $\DD$ is induced by the homogeneous polynomial differential form 
\[ \omega_\DD = \ii_\X\ii_R dx_1\wedge\dots\wedge dx_{n+s} \in H^0(X;\Omega^q_X(\beta)), \]
where $\beta=  \sum \alpha_i - \omega_X$. 
\begin{obs} \label{obssplit} Conversely, if $\X_1,\dots,\X_{n-q}$ are homogeneous polynomial vector fields such that the element $\omega=\ii_\X\ii_R dx_1\wedge\dots\wedge dx_{n+s}$ does not vanish in codimension one, then the associated distribution $\DD_\omega$ of codimension $q$ will satisfy
\[ \TT\DD_\omega\simeq \bigoplus_{i=1}^{n-q}\OO_X(\alpha_i), \]
where $\alpha_i=-\deg(\X_i)$. This is because the hypothesis on $S(\omega)$ assures that the morphism $\bigoplus_{i=1}^{n-q}\OO_X(\alpha_i) \to \TT\DD_\omega$ defined by the $\X_i$'s will be an isomorphism in codimension $1$ (and hence everywhere).
\end{obs}

A good feature of these distributions is that its singular set $S(\DD)$ is a very particular determinantal variety. Let $A(\X)$ be the matrix whose columns are the coefficients of $\X_1,\dots,\X_{n-q},R_1,\dots,R_s$. By duality,
\[ \left( \ii_{\X\wedge R} dx_1\wedge\dots\wedge dx_{n+s}\right) (p)=0 \Leftrightarrow \X\wedge R (p)=0.\]
Then the singular set coincides with the locus where the vector fields are linearly dependent, i.e., the set where the rank of $A(\X)$ drops. Equivalently, 
$$S(\DD)=V\left(\delta_1,\dots,\delta_{{n+s}\choose{q}} \right)$$
where the $\delta_i$'s are the $(n+s-q)\times(n+s-q)$-minors of $A(\X)$. The following proposition follows directly from Hilbert-Schaps Theorem (Theorem 5.1, \cite{A}):

\begin{prop} \label{CM}Let $\DD$ be a codimension $1$ split distribution in a complete toric variety satisfying $\codim(S(\DD))=2$. Then $S(\DD)\subseteq \CC^{n+s}$ is Cohen-Macaulay. 
\end{prop}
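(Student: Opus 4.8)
The plan is to reduce Proposition~\ref{CM} to a statement about codimension and then invoke the Hilbert–Schaps characterization of determinantal Cohen–Macaulay schemes. First I would recall the precise shape of the singular set. Since $\DD$ has codimension $1$, we have $q=1$, so the tangent sheaf decomposes as $\TT\DD\simeq\bigoplus_{i=1}^{n-1}\OO_X(\alpha_i)$ and $\omega_\DD=\ii_\X\ii_R\Omega$ with the $n-1$ twisted vector fields $\X_1,\dots,\X_{n-1}$ together with the $s$ radial fields $R_1,\dots,R_s$. Thus the associated matrix $A(\X)$ has $n+s$ rows (one per homogeneous coordinate) and $(n-1)+s=n+s-1$ columns, and by the duality observation in the excerpt the singular set $S(\DD)\subseteq\CC^{n+s}$ is exactly the locus where $A(\X)$ drops rank, namely the vanishing of all its maximal $(n+s-1)\times(n+s-1)$ minors $\delta_1,\dots,\delta_{n+s}$.

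Next I would identify this with a generic determinantal variety of the expected codimension. For an $(n+s)\times(n+s-1)$ matrix, the locus where the rank is at most $(n+s-1)-1=n+s-2$ has expected codimension $((n+s)-(n+s-2))\cdot((n+s-1)-(n+s-2))=2\cdot1=2$. Our hypothesis is precisely $\codim(S(\DD))=2$, so $S(\DD)$ meets the generic determinantal locus in the expected codimension. This is exactly the hypothesis under which Hilbert–Schaps (Theorem~5.1 of \cite{A}) applies: a determinantal scheme defined by the maximal minors of a matrix of polynomials, whose codimension equals the generic (expected) codimension for that matrix size, is Cohen–Macaulay. Applying that theorem to $A(\X)$ gives that $S(\DD)$ is Cohen–Macaulay, which is the assertion.

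The main step to make rigorous is verifying that the excerpt's determinantal description indeed presents $S(\DD)$ as the maximal-minors locus of the rectangular matrix $A(\X)$ in the precise format that Hilbert–Schaps requires, and that the codimension hypothesis $\codim(S(\DD))=2$ matches the generic codimension bound $2$ computed above. I expect the only genuine obstacle to be bookkeeping: confirming that the number of columns is $n+s-1$ (so that maximal minors are $(n+s-1)$-minors, consistent with the notation in the excerpt where the minors are labelled $(n+s-q)\times(n+s-q)$ with $q=1$), and that the ambient ring $\CC[x_1,\dots,x_{n+s}]$ is the Cohen–Macaulay (indeed regular) ring over which the theorem is stated. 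Once the format is matched, the conclusion is immediate, since Cohen–Macaulayness for determinantal schemes of expected codimension is exactly the content of the cited theorem; no integrability or toric-specific input beyond the splitting and the codimension-$2$ assumption is needed.
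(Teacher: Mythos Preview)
Your proposal is correct and matches the paper's approach exactly: the paper simply asserts that the proposition follows directly from the Hilbert--Schaps Theorem (Theorem~5.1 in \cite{A}), and you have spelled out precisely why---namely that $S(\DD)$ is the maximal-minors locus of the $(n+s)\times(n+s-1)$ matrix $A(\X)$ and has the expected codimension $2$.
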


With this in mind, we can give a description of the singular set of a generic codimension 1 split foliation in terms of its Kupka set in the following way:

\begin{prop}\label{K} Let $\FF$ be a codimension $1$ split foliation in a complete toric variety satisfying $codim(S(\FF))=2$ and $codim(S(\FF)\setminus K(\FF))\geq 3$. Then $S(\FF)=\ov{K(\FF)}$.
\end{prop}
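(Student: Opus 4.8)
The plan is to reduce everything to the Cohen--Macaulayness of $S(\FF)$ granted by Proposition \ref{CM}. The inclusion $\ov{K(\FF)}\subseteq S(\FF)$ is immediate: $K(\FF)\subseteq S(\FF)$ by definition and $S(\FF)$ is closed. Thus the entire content of the statement is the reverse inclusion $S(\FF)\subseteq\ov{K(\FF)}$, and the only thing that could obstruct it is the presence of an irreducible component of $S(\FF)$ (or an embedded component) lying entirely inside the non-Kupka locus $S(\FF)\setminus K(\FF)$.

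First I would apply Proposition \ref{CM} to conclude that $S(\FF)\subseteq\CC^{n+s}$ is Cohen--Macaulay, and extract from this its \emph{unmixedness}: a Cohen--Macaulay determinantal scheme of the expected codimension has no embedded components and is equidimensional. Together with the hypothesis $\codim(S(\FF))=2$, this forces every irreducible component $W$ of $S(\FF)$ to have dimension exactly $n+s-2$.

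Next I would exploit that $K(\FF)=\{x\in S(\FF)\ \vert\ d\omega_\FF(x)\neq 0\}$ is cut out by an open condition, so $S(\FF)\setminus K(\FF)$ is closed in $S(\FF)$ and hence $W\setminus K(\FF)$ is closed in each such $W$. The hypothesis $\codim(S(\FF)\setminus K(\FF))\geq 3$ gives $\dim(S(\FF)\setminus K(\FF))\leq n+s-3$, so $W\setminus K(\FF)$ is a \emph{proper} closed subset of the irreducible variety $W$ of dimension $n+s-2$. Consequently $W\cap K(\FF)$ is a nonempty dense open subset of $W$, whence $W\subseteq\ov{K(\FF)}$. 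Letting $W$ range over the components of $S(\FF)$ yields $S(\FF)\subseteq\ov{K(\FF)}$, and combined with the trivial inclusion this gives $S(\FF)=\ov{K(\FF)}$.

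The crux is the equidimensionality step. Without the Cohen--Macaulay input, $S(\FF)$ could a priori acquire a component of dimension $\leq n+s-3$ sitting inside the codimension-$3$ set $S(\FF)\setminus K(\FF)$, which could never be recovered as a limit of Kupka points; Proposition \ref{CM} is precisely what rules this out. The remaining verifications (openness of the Kupka condition and the elementary dimension count on an irreducible variety) are routine.
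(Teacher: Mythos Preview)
Your proof is correct and follows essentially the same route as the paper: invoke Proposition~\ref{CM} for Cohen--Macaulayness (hence equidimensionality) of $S(\FF)$, then use the codimension hypothesis on $S(\FF)\setminus K(\FF)$ to force every irreducible component of $S(\FF)$ to meet $K(\FF)$ in a dense open. The paper additionally cites \cite{LN} for the smoothness and codimension-$2$ structure of $K(\FF)$, but as your argument makes clear, only the openness of $K(\FF)$ in $S(\FF)$ is actually needed for the conclusion.
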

\begin{proof} By Proposition \ref{CM}, the singular locus of $\FF$ is equidimensional. Also, the Kupka set of an holomorphic foliation on a complex manifold of dimension $n\geq 3 $
is a smooth subvariety of codimension $2$ whenever $\codim (S(\FF))\geq 2$  (see Proposition 1.4.1 in \cite{LN}). In particular, the hypothesis on the codimension 
of $S(\FF)\setminus {K(\FF)}$ implies that $S(\FF)=\ov{K(\FF)}$.
\end{proof}

\section{Stability}

\subsection{Split foliations}
In this section we will prove the stability results regarding split foliations stated in the introduction. 

The insights in the constructions in \cite{CP} will be very useful for our
purposes. In fact, we will use an analogous algebraic parametrization of the set of split distributions and show that its differential at a generic point is surjective. This same strategy
was also used in \cite{Log1}, \cite{Log2} and \cite{Rat}. As mentioned before, we will restrict ourselves to the cases where twisted vector fields can be expressed in homogeneous coordinates. We will keep using the notation $w=w_1\wedge\dots\wedge w_l$, $\widehat{w}_i=w_1\wedge\dots\wedge w_{i-1} \wedge w_{i+1}\wedge \dots\wedge w_l$ for vector fields $w_1,\dots,w_l$ and $\Omega=dx_1\wedge\dots\wedge dx_{n+s}$

Let $1\leq q \leq n$ be an integer and $\alpha_1,\dots,\alpha_{n-q} \in Pic(X)$ such that $h^1(X,\OO_X(-\alpha_i))=0$ for every $1\leq i \leq n-q$. By Remark \ref{poli}, this implies that every element in $H^0(X,\TT X(-\alpha_i))$ can be described as an homogeneous polynomial vector field (uniquely up to linear combinations of the radial vector fields).

Let $V_i$ be the space of homogeneous polynomial vector fields $\X_i$ of degree $\deg(\X_i)=-\alpha_i$. Consider the multilinear morphism
\[ \Phi : \bigoplus_{i=1}^{n-q} V_i \longrightarrow 
H^0(X, \Omega^q_{X}(\sum \alpha_i-\omega_X)) \]
defined by $\left(\X_1,\dots \X_{n-q}\right)  \longmapsto 
\ii_\X\ii_R\Omega$. The differential of $\Phi$ at $\X$ is 
\[ d\Phi(\X)(Z_1,\dots,Z_{n-q})=  \sum_{j=1}^{n-q} (-1)^{j-1}\ii_{Z_j} \ii_{\widehat{\X}_j}\ii_R \Omega. \]
Indeed, if $\varepsilon$ is a parameter such that $\varepsilon^2=0$, $\X=(\X_1,\dots,\X_{n-q})$ and $Z=(Z_1,\dots,Z_{n-q})$ we have
\begin{align*} \Phi(\X+\varepsilon Z)&= \ii_{(\X_1+\varepsilon Z_1)}\dots\ii_{(\X_{n-q}+\varepsilon Z_{n-q})}\ii_R\Omega \\
&= \Phi(\X)+\varepsilon \sum_{j=1}^{n-q} \ii_{\X_1}\dots\ii_{\X_{j-1}}\ii_{\varepsilon Z_j} \ii_{\X_{j+1}}\dots\ii_{\X_{n-q}}\ii_R \Omega  \\
&= \Phi(\X) + \varepsilon \sum_{j=1}^{n-q} (-1)^{j-1}\ii_{Z_j} \ii_{\widehat{\X}_j}\ii_R \Omega .
\end{align*}  
Let $\UU$ be the open set of $H^0(X,\Omega^q_{X}(\sum \alpha_i-\omega_X))$ where $\codim(S(\DD))\geq 2$.
Remark \ref{obssplit} and the preceding discussion establish that the set of split distributions with splitting type $(\alpha_1,\dots,\alpha_{n-q})$ coincides with the image of  $\Phi\rvert_{\Phi^{-1}\UU}$, which contains an open set of $\DD_q(X,\sum \alpha_i-\omega_X)$:

\begin{obs} As mentioned in the proof of Proposition \ref{K}, by [Proposition 1.4.1 in \cite{LN}]  every split codimension $1$ foliation $\FF$ such that $\codim(S(\FF))= 2$ satisfies $\codim(K(\FF))=2$. The next theorem shows that the image of $\Phi$ contains a neighbourhood of $\FF$ if every codimension 2 component of $S(\FF)$ is generically of Kupka type.
\end{obs}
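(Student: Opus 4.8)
The plan is to mirror the proof of Theorem \ref{stab2}: realizing the locus of split foliations of type $(\alpha_1,\dots,\alpha_{n-1})$ as (an open part of) the image of the parametrization $\Phi$, I would show that $d\Phi(\X)$ surjects onto the tangent space $T_{[\omega]}\FF_1(X,\sum\alpha_i+\omega_X)$ at the given foliation; by the same genericity argument invoked at the end of Theorem \ref{stab2}, generic surjectivity forces the image to contain a Zariski neighbourhood of $\FF$. First I would write $\omega=\ii_\X\ii_R\Omega$ for homogeneous $\X_1,\dots,\X_{n-1}$, apply Lemma \ref{diff} to normalize $d\omega=\pm c_1\,\ii_\X\ii_{R'}\Omega$ via the same change of radial fields $R'_i=R_i+\tfrac{c_i}{c_1}R_1$, and record that, since $\FF$ is split of codimension one, $\codim(S(\FF))=2$, while the hypothesis $\codim(S(\FF)\setminus\ov{K(\FF)})\geq 3$ together with Proposition \ref{K} gives $S(\FF)=\ov{K(\FF)}$. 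Note that for a $1$-form condition $(1)$ is vacuous, so the only constraint defining $\FF_1$ is the integrability $(2)$.

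Given a tangent vector $\eta$ to $\FF_1$ at $[\omega]$, that is, a homogeneous $1$-form of degree $\sum\alpha_i+\omega_X$ satisfying the linearized integrability equation $\eta\wedge d\omega+\omega\wedge d\eta=0$, the goal is to produce a decomposition
\[ d\eta=\sum_{i=1}^{n-1}\ii_{Y_i}\ii_{\widehat{\X}_i}\ii_{R'}\Omega+\sum_{t=2}^{s}\ii_{Z_t}\ii_\X\ii_{\widehat{R}'_t}\Omega \]
with \emph{polynomial} vector fields $Y_i,Z_t$ of degrees $\deg(\X_i)$ and $0$. Once this is available the argument of Theorem \ref{stab2} applies verbatim: contracting with $R_1$ yields $\pm c_1\eta=\ii_{R_1}d\eta$; the relations $\ii_{R_j}\eta=\ii_{Z_j}\omega=0$ force each $Z_j\in\ker(\omega)$, hence $Z_j=\sum_i f_i^j\X_i+\sum_k g_k^j R_k$; collecting terms gives $\pm c_1\eta=\sum_i\ii_{Y_i}\ii_{\widehat{\X}_i}\ii_R\Omega+g\,\omega$ with $g=\sum_j g_j^j$ of degree zero, hence constant; and since $g\omega$ vanishes in $T_{[\omega]}\FF_1$, this exhibits $\eta$ in the image of $d\Phi(\X)$.

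The main obstacle is precisely the production of that decomposition with polynomial coefficients. In the codimension $q\geq 2$ case one obtains it from [Lemma 3.2, \cite{CP}] and extends the a priori rational coefficients across $S(\DD)$ by Hartogs, which is legitimate because $\codim(S(\DD))\geq 3$. Here $S(\FF)=\ov{K(\FF)}$ has codimension exactly two, so Hartogs fails along this component and the extension must be argued separately. Off $\ov{K(\FF)}$, where $\codim\geq 3$, the rational coefficients extend holomorphically as before. Along the Kupka set I would exploit its local product structure: by Proposition 1.4.1 of \cite{LN}, near a Kupka point $\omega$ is, up to a local holomorphic unit, the pullback of a $1$-form on $\CC^2$ with an isolated singularity, so $\FF$ is locally the product of a planar foliation with a smooth factor. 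In this normal form one checks that every integrable first-order deformation $\eta$ inherits the same product shape and that the candidate coefficients $Y_i,Z_t$ of $d\eta$ extend holomorphically across the codimension-two Kupka locus. Patching these local extensions with the Hartogs extension off $\ov{K(\FF)}$ then furnishes the global polynomial $Y_i,Z_t$.

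With the decomposition established on a dense open subset of split foliations of the prescribed type, $d\Phi$ is generically surjective, and exactly as in \cite{CP} this suffices to conclude that the image of $\Phi$ contains a Zariski open set $\UU\subseteq\FF_1(X,\sum\alpha_i+\omega_X)$ with $\FF\in\UU$ and $\TT\FF'\simeq\TT\FF$ for all $\FF'\in\UU$. The most delicate point is the Kupka-set extension, since it is the only place where the weaker codimension hypothesis (two rather than three) is genuinely used, and where one must invoke the integrability of $\FF$ through its local normal form rather than the purely determinantal features of the singular locus that sufficed in the distribution case.
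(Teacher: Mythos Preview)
Your overall strategy is correct and matches the paper's: write $\omega=\ii_\X\ii_R\Omega$, normalize $d\omega$ via Lemma \ref{diff}, produce a decomposition of $d\eta$ in terms of polynomial vector fields $Y_i,Z_t$, and then conclude exactly as in Theorem \ref{stab2}. The difference lies entirely in how that decomposition is obtained.

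The paper does not adapt the distribution argument (Lemma 3.2 of \cite{CP}) and then repair it along the Kupka set; instead it observes that a tangent vector $\eta$ to $\FF_1$ satisfies the consequence $d\omega\wedge d\eta=0$ of the linearized integrability, and applies \emph{Lemma 3.1} of \cite{CP} directly. That lemma is precisely the codimension-one statement: under the hypothesis $\codim(S(\omega)\setminus\ov{K(\omega)})\geq 3$, any closed $2$-form annihilated by $d\omega$ admits a decomposition with \emph{polynomial} (not merely rational) coefficient vector fields of the correct degrees. So the Kupka-set extension you isolate as ``the most delicate point'' is already packaged inside Lemma 3.1 of \cite{CP}, and the paper simply cites it.

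Your route---obtaining rational $Y_i,Z_t$, extending by Hartogs off $\ov{K(\FF)}$, and then arguing separately along the Kupka locus via its local product structure---is essentially a sketch of how one would \emph{prove} Lemma 3.1 of \cite{CP}. The sentence ``one checks that every integrable first-order deformation $\eta$ inherits the same product shape and that the candidate coefficients $Y_i,Z_t$ of $d\eta$ extend holomorphically'' is where the real content lies, and as written it is a genuine gap: the local normal form gives you a transverse $2$-dimensional model, but you still need to show that the specific rational functions you constructed globally are bounded (equivalently, holomorphic) near a generic Kupka point, which requires an explicit computation rather than an appeal to the product structure alone. If you want to make your argument self-contained, this is the step to fill in; otherwise, citing Lemma 3.1 of \cite{CP} (rather than Lemma 3.2) gives the decomposition immediately and collapses your last two paragraphs into one line, exactly as the paper does.
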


\begin{teo}\label{stab1} Let $X$ be a complete toric variety of dimension $n\geq 3$ and $\alpha_1,\dots\alpha_{n-1}\in Pic(X)$ such that $h^1(X,\OO_X(-\alpha_i))$ for every $1\leq i \leq n-1$. Then for every foliation $\FF\in \FF_1(X, \sum \alpha_i-\omega_X)$ 
satisfying $codim(S(\FF)\setminus \ov{K(\FF)})\geq 3$ and 
\[ \mathcal{T}\FF \simeq \bigoplus_{i=1}^{n-1}\OO_{X}(\alpha_i)\]
there exists a Zariski open set $\UU\subseteq \FF_1(X, \sum \alpha_i-\omega_X)$ containing $\FF$ such that
$\TT\FF'\simeq\TT\FF$ for every $\FF' \in \UU$.
\end{teo}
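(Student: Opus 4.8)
The plan is to follow the proof of Theorem~\ref{stab2} as closely as possible, the genuinely new feature being that for $q=1$ the singular set always contains codimension-$2$ components, so the hypothesis $\codim(S(\DD))\ge 3$ is no longer available and must be replaced by the local structure of the foliation along its Kupka set. Writing $\omega=\omega_\FF=\ii_\X\ii_R\Omega$ with homogeneous vector fields $\X_1,\dots,\X_{n-1}$ representing the splitting $\TT\FF\simeq\bigoplus\OO_X(\alpha_i)$, Lemma~\ref{diff} gives $d\omega=\sum_{t=1}^s(-1)^{n+t}c_t\,\ii_\X\ii_{\widehat R_t}\Omega$ with $(c_1,\dots,c_s)=\deg(\omega)$; assuming $c_1\neq 0$ and setting $R'_i=R_i+(c_i/c_1)R_1$ for $2\le i\le s$ we again obtain $d\omega=\pm c_1\,\ii_\X\ii_{R'}\Omega$.

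Next I take a tangent vector $\eta\in\TT_{[\omega]}\FF_1(X,\beta)$, with $\beta=\omega_X+\sum\alpha_i$; that is, a homogeneous $1$-form of degree $\beta$ satisfying the linearized integrability relation $\eta\wedge d\omega+\omega\wedge d\eta=0$. The goal is to show that $\eta$ has the shape $\sum_{i}(-1)^{i-1}\ii_{Y_i}\ii_{\widehat\X_i}\ii_R\Omega$, i.e.\ that $\eta$ lies in the image of the differential $d\Phi(\X)$. Exactly as in Theorem~\ref{stab2}, this reduces to producing a decomposition
\[ d\eta=\sum_{i=1}^{n-1}\ii_{Y_i}\ii_{\widehat\X_i}\ii_{R'}\Omega+\sum_{t=2}^{s}\ii_{Z_t}\ii_\X\ii_{\widehat R'_t}\Omega \]
with polynomial vector fields $Y_i,Z_t$ of degrees $\deg(\X_i)$ and $0$. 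Once such a decomposition is in hand, contracting with $R_1$ and using that $\eta$ descends to $X$ (so $\ii_{R_j}\eta=0$ and each $Z_j\in\ker\omega$) reduces $\pm c_1\eta$ to $\sum_i\ii_{Y_i}\ii_{\widehat\X_i}\ii_R\Omega$ plus a degree-zero multiple of $\omega$; the latter is a constant multiple that vanishes in $\TT_{[\omega]}\FF_1(X,\beta)$, so the argument closes exactly as before.

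The only step that genuinely differs is the construction of this decomposition, which in Theorem~\ref{stab2} was supplied by [Lemma~3.2, \cite{CP}] under the hypothesis $\codim(S(\DD))\ge 3$. Here I would first obtain it over $\CC^{n+s}\setminus S(\FF)$, where it produces vector fields $Y_i,Z_t$ that are a priori only rational, with polar locus contained in $S(\FF)$. Over $S(\FF)\setminus\ov{K(\FF)}$, which has codimension $\ge 3$ by hypothesis, these fields extend holomorphically by the same Hartogs-type argument used in the higher-codimension case. \textbf{The main obstacle is the extension across the codimension-$2$ Kupka locus} $\ov{K(\FF)}$, where Hartogs no longer applies directly to meromorphic objects. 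To overcome it I would invoke the classical local structure of Kupka singularities (see \cite{LN}): along the smooth codimension-$2$ variety $K(\FF)$ the form $\omega$ is, in suitable local coordinates, the pullback under a submersion of a $1$-form on $\CC^2$ with an isolated singularity, so that $\FF$ is locally a product. Feeding this product structure into the defining relation for $Y_i,Z_t$ and into the linearized integrability equation satisfied by $\eta$ forces the potential poles along $\ov{K(\FF)}$ to cancel, so that $Y_i,Z_t$ extend holomorphically there as well; combined with the previous extension they are globally polynomial of the required degrees.

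With the decomposition established, the differential $d\Phi(\X)$ is surjective onto $\TT_{[\omega]}\FF_1(X,\beta)$ at the generic split foliation. Restricting $\Phi$ to the locus of tuples defining integrable forms and arguing as in \cite{CP} and in Theorem~\ref{stab2}, this generic surjectivity shows that the image of $\Phi$ contains a Zariski open neighbourhood $\UU$ of $\FF$ in $\FF_1(X,\beta)$; every $\FF'\in\UU$ is then split with $\TT\FF'\simeq\TT\FF$, as claimed.
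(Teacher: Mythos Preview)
Your overall architecture matches the paper's proof exactly: the setup via Lemma~\ref{diff}, the passage to the modified radial fields $R'_i$, the contraction with $R_1$, and the concluding argument are identical. The single substantive step, as you correctly isolate, is the decomposition
\[
d\eta=\sum_{i=1}^{n-1}\ii_{Y_i}\ii_{\widehat\X_i}\ii_{R'}\Omega+\sum_{t=2}^{s}\ii_{Z_t}\ii_\X\ii_{\widehat R'_t}\Omega.
\]
Here the paper does not argue directly: it first observes that the linearized integrability implies $d\omega\wedge d\eta=0$ (take $d$ of your relation $\eta\wedge d\omega+\omega\wedge d\eta=0$), and then invokes \emph{[Lemma~3.1, \cite{CP}]} as a black box. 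That lemma is precisely the codimension-one analogue of the Lemma~3.2 used in Theorem~\ref{stab2}, with the hypothesis $\codim(S(\DD))\ge 3$ replaced by the Kupka condition on $d\omega$.

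Your alternative is to re-prove that lemma on the fly: produce the $Y_i,Z_t$ rationally, extend across the codimension-$\ge 3$ part by Hartogs, and then extend across $\ov{K(\FF)}$ using the local product structure. The first two steps are fine. The third, however, is exactly where all the content of Lemma~3.1 of \cite{CP} lives, and your justification (``feeding this product structure into the defining relation \dots\ forces the potential poles to cancel'') is not an argument but a hope. The Kupka normal form tells you that locally $\omega$ is the pullback of a $1$-form on $\CC^2$ with an isolated zero; one then has to analyse what $d\omega\wedge d\eta=0$ forces on the $2$-jet of $\eta$ along the Kupka component and check that the division producing $Y_i,Z_t$ stays holomorphic. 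This is a genuine local computation and cannot be waved away. So as written there is a gap at precisely the point you flagged as the main obstacle; either cite the CP lemma (as the paper does) or carry out that local analysis explicitly.
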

\begin{dem}  Let $\FF=\left[\omega\right]\in \PP H^0(X, \Omega^1_{X}(\sum \alpha_i-\omega_X))$ be the class of a differential form satisfying the conditions of the theorem.  Let $\X_1,\dots, \X_{n-1}$ be homogeneous polynomial vector fields  such that 
\[ \omega=\ii_\X\ii_R\Omega.   \]
By Lemma \ref{diff} we can assume $d\omega=\sum_{t=1}^s(-1)^{n-q+t-1} c_t \hspace{0.1cm} \ii_{\X}\ii_{\widehat{R}_t}\Omega$. Suppose (without loss of generality) $c_1\neq 0$ and define for $2\leq i \leq s$ 
\[R'_i= R_i+\frac{c_i}{c_1}R_1.\]
These new vector fields satisfy $\sum_t (-1)^{t-1}c_t \widehat{R}_t = c_1 R'_2\wedge\dots\wedge R'_s$ and therefore 
\[ d\omega = \pm c_1 \ii_\X\ii_{R'} \Omega.\]

Now let $\eta$ be an element of the tangent space of $\FF$ in $\FF_1(X, \sum \alpha_i-\omega_X)$, i.e.,  an homogeneous $1$-form of degree $\sum \alpha_i-\omega_X$ such that the first order deformation $\omega_\varepsilon:=\omega+\varepsilon \eta$ satisfies the equation 
 \[  \omega_\varepsilon \wedge d\omega_\varepsilon=0  \hspace{0.5cm} \text{(mod $\varepsilon^2$)} .\]
Since $\omega$ satisfies $\omega\wedge d\omega=0$, this equivalent to 
\[ \eta\wedge d\omega + \omega \wedge d\eta =0 .\]
Differentiating the above equation we can conclude that $d\omega\wedge d\eta=0$. By [Lemma 3.1, \cite{CP}] we get a description of $d\eta$ of the form
\[ d\eta = \sum_{i=1}^{n-1}\ii_{Y_i} \ii_{\widehat{\X}_i}\ii_{R'}\Omega + \sum_{t=2}^s \ii_{Z_t}\ii_\X\ii_{\widehat{R}_t'}\Omega,  \]
where $Y_1,\dots, Y_{n-1}$ and $Z_2,\dots,Z_s$ are polynomial vector fields. Replacing $Y_k$ (resp. $Z_j$) by its homogeneous component of degree $\deg(\X_i)$ (resp. $\deg(R_j)=0$) if necessary, we can suppose that these new vector fields are homogeneous and satisfy $\deg (Y_i)=\deg(\X_i)$ and $\deg(Z_t)=0$. The expression for $d\eta$ will still hold for degree considerations.   Contracting with $R_1$, by Lemma \ref{euler} we get
\[ \pm c_1 \eta = \ii_{R_1}d\eta=\sum_{i=1}^{n-1}\ii_{Y_i}\ii_{\widehat{\X}_i}\ii_R\Omega + \sum_{t=1}^s \ii_{Z_t}\ii_\X \ii_{\widehat{R}_t}\Omega. \]
Since $\eta$ descends to $X$ we have 
\begin{align*} 0&= \ii_{R_j}\eta=\ii_{R_j}\left(\pm c_1^{-1}\sum_{i=1}^{n-1}\ii_{Y_i}\ii_{\widehat{\X}_i}\ii_R\Omega + \sum_{t=1}^s \ii_{Z_t}\ii_\X \ii_{\widehat{R}_t}\Omega \right)  \\
&=\pm c_1^{-1} \ii_{R_j}\ii_{Z_j}\ii_\X\ii_{\widehat{R}_j}\Omega= \pm c_1^{-1}\ii_{Z_j}\omega,
\end{align*}
and therefore $Z_j\in\ker(\omega)$. This means there must exist some (homogeneous) polynomials $f_1^j,\dots,f_{n-1}^j,g_1^j,\dots,g_s^j$ such that $Z_j=\sum_i f_i^j \X_i + \sum_k g_k^j R_k$. But then
\[ \pm c_1 \eta =\sum_{i=1}^{n-1}\ii_{Y_i}\ii_{\widehat{\X}_i}\ii_R\Omega + \left( \sum_{j=1}^s g_j^j\right)\omega. \]
Observe that $g:=\sum_j g_j^j$ has degree zero, so we must have $g\in\CC$.  Since $g\omega=0$ in $\TT_{[\omega]}\FF_1(X,\sum \alpha_i-\omega_X)$, we can conclude that $\eta= \pm c_1^{-1}\sum_{i=1}^{n-1}\ii_{Y_i} \ii_{\widehat{\X}_i}\ii_R\Omega$ is actually in the 
image of the differential of $\Phi$ at  the point $\X=(\X_1,\dots,\X_{n-1})$.

\vspace{0.3cm}

The previous calculations tell us that the differential of our parametrization is generically surjective. As in the end of the proof of [Theorem 1, \cite{CP}], this is sufficient to assure that the image of
$\Phi$ contains a neighbourhood of $\FF$ in $\FF_1(X,\sum \alpha_i-\omega_X)$.
\end{dem}

\begin{obs} In [Section 9, \cite{Quall}], the author makes a quite short proof of the stability of codimension $1$ split foliations in projective spaces. Loosely speaking, the key point of the argument is to observe that the problem of stability becomes much easier after dualizing (i.e., taking annihilators). In order to do so, one has to first assure that a generic split foliation belongs to the open set where the morphism $Inv^X\dashrightarrow iPf^X$ is a rational equivalence (here $Inv^X$ and $iPf^X$ denote the moduli spaces of involutive/integrable subsheaves of $\mathcal{T}X$ and $\Omega_X^1$ respectively. For more details on these objects the reader is referred to [Section 6, \cite{Quall}]). Using Proposition \ref{K} the exact same argument leads to the following generalization: 

\begin{teo} Let $X$ be a smooth complete toric variety and 
\[  0\longrightarrow I(\FF)\longrightarrow \Omega^1_{X\times S|S}\longrightarrow \Omega^1_\FF \longrightarrow 0 \]
be a flat family of codimension 1 integrable Pfaff systems. Suppose further that $0\rightarrow I(\FF)_s \rightarrow\Omega^1_X \rightarrow \Omega^1_\FF \rightarrow 0$ defines a foliation  such that $\TT\FF\simeq \bigoplus_{i=1}^{n-1} \LL_i$ with $h^1(\LL_i \otimes \LL_j^{-1})=0$ for every $i,j$. If $S(\FF)\setminus \overline{K(\FF_s)}$ has codimension greater than 2, then every member of the family defines a split foliation.
\end{teo}
\end{obs}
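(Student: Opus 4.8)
The plan is to follow the dualization strategy of [Section 9, \cite{Quall}], transporting the problem from integrable Pfaff systems to involutive subsheaves of $\TT X$, where the split condition becomes transparent. First I would make the annihilator correspondence explicit in families: given the flat family $I(\FF)\subseteq\Omega^1_{X\times S|S}$, forming the fibrewise annihilators $\mathrm{Ann}(I(\FF)_s)\subseteq\TT X$ produces a family of involutive subsheaves, i.e.\ a family in $Inv^X$, while the double annihilator recovers $I(\FF)_s$. The crucial point is that this correspondence is a rational equivalence $Inv^X\dashrightarrow iPf^X$ defined on an open set, and one must check that the central fibre $\FF_s$ lies in it. This is exactly where Proposition \ref{K} enters: the hypothesis $\codim(S(\FF)\setminus\ov{K(\FF_s)})\geq 3$ together with Proposition \ref{CM} forces $S(\FF_s)=\ov{K(\FF_s)}$ to be equidimensional of codimension $2$ and Cohen--Macaulay, which guarantees that the tangent sheaf $\TT\FF_s=\mathrm{Ann}(I(\FF)_s)$ is reflexive and that taking annihilators commutes with the base change to nearby fibres. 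Hence the family can be dualized on a neighbourhood of $s$ to a flat family of involutive subsheaves of $\TT X$.

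Second, I would argue that on the $Inv^X$ side the split condition is preserved under small deformations. Writing $\TT\FF_s\simeq\bigoplus_{i=1}^{n-1}\OO_X(\alpha_i)$, which is locally free away from the codimension-$2$ set $S(\FF_s)$, a deformation of the inclusion $\bigoplus_i\OO_X(\alpha_i)\hookrightarrow\TT X$ moves the embedding while the abstract isomorphism type of the source sheaf stays locally constant; to first order the deformations are governed by $H^0(X,\mathcal{H}om(\TT\FF_s,\N_{\FF_s}))$ and the obstructions by the corresponding $\mathrm{Ext}^1$. This is the step where Demazure's Vanishing Theorem plays the role that Bott vanishing plays on $\PP^n$ in \cite{CP} and \cite{Quall}: it supplies the vanishing of the relevant higher cohomology of the line bundles $\OO_X(\alpha_i)$ and their twists, so that the deformations are unobstructed and the sheaf $\bigoplus_i\OO_X(\alpha_i)$ cannot degenerate. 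Consequently the tangent sheaf of every nearby member remains isomorphic to $\bigoplus_{i=1}^{n-1}\OO_X(\alpha_i)$, i.e.\ stays split.

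Finally I would dualize back: applying the double annihilator and using that the family has remained inside the rational-equivalence locus, each nearby involutive subsheaf corresponds to a member $I(\FF)_{s'}$ of the original family, which therefore defines a split foliation; since $S$ is arbitrary this gives the conclusion for every member of the family.

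The main obstacle I expect is making the annihilator functor behave well in families, namely proving that it commutes with base change in a neighbourhood of $s$ and that the two moduli functors $Inv^X$ and $iPf^X$ are genuinely identified there. This is precisely what the reflexivity extracted from Propositions \ref{CM} and \ref{K} is meant to secure, and verifying that the codimension hypotheses supply exactly the depth needed for the double dual to be the identity is the delicate part. A secondary technical point is checking that the degrees $\alpha_i$ and $\omega_X+\sum\alpha_i$ fall in the range for which Demazure's theorem yields the required vanishing.
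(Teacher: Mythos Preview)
Your proposal is correct and follows essentially the same approach the paper indicates: the paper does not spell out a proof but simply asserts that Quallbrunn's dualization argument from \cite[Section 9]{Quall} carries over verbatim once one replaces the projective-space ingredients by Proposition~\ref{K} (to guarantee the central fibre lies in the locus where $Inv^X\dashrightarrow iPf^X$ is an equivalence) and Demazure's vanishing theorem (to supply the cohomological vanishing needed on the $Inv^X$ side). You have identified both substitutions and their roles accurately, and your outline of the three steps---dualize, deform on the tangent-sheaf side, dualize back---is exactly the structure of Quallbrunn's proof; the technical caveats you flag about base change for the annihilator and the precise range of applicability of Demazure vanishing are genuine but are likewise left implicit in the paper.
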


Now we turn our attention to the case of split distributions of codimension greater than one:

\begin{teo} \label{stab2} Let $q\geq 2$ be an integer, $X$ a complete toric variety of dimension $n\geq 3$ and $\alpha_1,\dots\alpha_{n-q}\in Pic(X)$ such that $h^1(X,\OO_X(-\alpha_i))=0$ for every $1\leq i \leq n-q$. Then for every distribution $\DD\in \DD_q(X, \sum \alpha_i-\omega_X)$ 
satisfying $\codim(S(\DD))\geq 3$ and 
\[ \mathcal{T}\DD \simeq \bigoplus_{i=1}^{n-q}\OO_{X}(\alpha_i)\]
there exists a Zariski open set $\UU\subseteq \DD_q(X, \sum \alpha_i-\omega_X)$ containing $\DD$ such that
$\TT\DD'\simeq\TT\DD$ for every $\DD' \in \UU$.
\end{teo}
\begin{dem} Let $\DD=\left[\omega\right]\in \PP H^0(X, \Omega^q_{X}(\sum \alpha_i-\omega_X))$ be the class of a differential $q$-form satisfying the conditions of the theorem. Let $\X_1,\dots, \X_{n-q}$ be homogeneous polynomial vector fields  such that 
\[ \omega=\ii_\X\ii_R\Omega.   \]
Now let $\eta$ be an element of the tangent space of $\DD$ in $\DD_q(X, \sum \alpha_i-\omega_X)$, i.e.,  an homogeneous $q$-form of degree $\sum \alpha_i-\omega_X$ such that $\omega_\varepsilon:=\omega+\varepsilon \eta$ satisfies the equation 
 \[  \ii_v(\omega_\varepsilon) \wedge \omega_\varepsilon=0  \hspace{0.5cm} \text{(mod $\varepsilon^2$)} \hspace{0.5cm} \forall v\in \bigwedge^{q-1}\mathcal{T}\CC^{n+s} .\]
Since $\omega$ itself satisfies the above equality, this is equivalent to
\[ \ii_v (\eta) \wedge \omega + \ii_v (\omega) \wedge \eta =0 \hspace{0.5cm} \forall v\in \bigwedge^{q-1}\mathcal{T}\CC^{n+s}. \]
We can apply [Lemma 3.2, \cite{CP}] (and replace each vector field by its homogeneous component of the corresponding degree) in order to write $\eta$ in the form
\[ \eta = \sum_{i=1}^{n-q}\ii_{Y_i} \ii_{\widehat{\X}_i}\ii_{R}\Omega + \sum_{t=1}^s \ii_{Z_t}\ii_\X\ii_{\widehat{R}_t}\Omega,  \]
where $Y_1,\dots, Y_{n-q}$ and $Z_2,\dots,Z_s$ are polynomial vector fields such that $\deg (Y_i)=\deg(\X_i)$ and $\deg(Z_t)=0$ . Now if we contract $\eta$ with $R_j$ we get
\[  \ii_{R_j}\eta=\ii_{Z_j}\omega=0. \]
From here we can conclude as in the previous proof.
\end{dem}

\begin{obs}\label{obs} The hypotheses on the vanishing of $h^1(X,\OO_X(-\alpha_i))$ in theorems \ref{stab1} and \ref{stab2} are in order to guarantee that every distribution $\DD$ with tangent sheaf
\[ \TT\DD\simeq \bigoplus_{i=1}^{n-q} \OO_X(\alpha_i) \]
is defined by an element of the form $\omega_\DD=\ii_{\X_1}\dots\ii_{\X_{n-q}} \ii_R\Omega$ for some homogeneous polynomial vector fields $\X_i$ of degree $\deg(\X_i)=-\alpha_i$. The proofs of the theorems above actually imply the following:
If this cohomology groups do not vanish but $\DD$ (resp. $\FF$) satisfies the corresponding hypothesis on its singular set and happens to be defined by a differential form as above, then there exists a neighbourhood $\UU\subseteq \DD_q(X,\sum \alpha_i-\omega_X)$ of $\DD$ (resp. a neighbourhood $\UU\subseteq\FF_q(X,\sum \alpha_i-\omega_X)$ of $\FF$)   such that the same holds for every element in $\UU$.
\end{obs}

\subsection{Equivariant linear pullbacks}
In \cite{CP} the stability of split foliations is used to prove that the pullback of generic degree $d$ foliations by linear morphisms $\PP^{n+m}\dashrightarrow \PP^n$ fill out 
components of $\FF_q(\PP^{n+m},d)$. The aim of this section is to generalize this statement to our setting. First, we shall analyse  the ingredients.

In projective spaces, the $D_i$'s are linearly equivalent and every twisted vector field $Z\in H^0(\PP^n,\mathcal{T}\PP^n(-D_i))$ has -in homogeneous coordinates- constant coefficients. Moreover, the intersection of $k$ of them results in a linearly embedded $\PP^{n-k}$. Although this kind of phenomenon is desirable 
for our purposes, we cannot aspire to encounter such behaviour when dealing with arbitrary (not even smooth) toric varieties. For this reason, we need to emphasize on some special
divisors.

The set of effective line bundles $\eff(X)$ is the $s$-dimensional strictly convex (meaning that it does not contain any non-trivial subspace) closed polyhedral cone generated by the 
classes of the $D_i$'s. Thus we can define the following relation in $Pic(X)$ :
$$\alpha \prec \beta \Leftrightarrow  \alpha-\beta \notin \eff(X).  $$

\begin{deff}\label{defdiv} Let $D_j$ be an invariant $T$-divisor. Then $D_j$ is \textit{maximal} if for every $1\leq i \leq n+s$ we have either $[D_i]\prec [D_j]$ or $[D_i]=[D_j]$. 
\end{deff}
Observe that, since $X$ is complete, for $\alpha,\beta\in Pic(X)$ we have either $\alpha\prec\beta$, $\beta\prec\alpha$ or $\alpha=\beta$.
Of course, the above definition can be expressed in terms of global sections as 
$$\dim H^0(X,\OO_X(D_i-D_j)) =0$$  
for every $D_j\not\sim D_i$. 

\begin{prop} Every toric variety admits a maximal divisor.
\end{prop}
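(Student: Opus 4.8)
The plan is to reinterpret maximality as a maximization problem for a suitable linear functional on $Pic(X)\otimes_\ZZ\RR$. Recall that $\eff(X)$ is a strictly convex (hence pointed) closed polyhedral cone. The crucial consequence is that its dual cone has nonempty interior, so there exists a linear functional $\ell\colon Pic(X)\otimes_\ZZ\RR\to\RR$ with $\ell(\gamma)>0$ for every $\gamma\in\eff(X)\setminus\{0\}$. I would fix such an $\ell$ at the outset.

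Next I would select, among the finitely many classes $[D_1],\dots,[D_{n+s}]$, one that maximizes the value of $\ell$; call it $[D_j]$. The claim is that $D_j$ is maximal in the sense of Definition \ref{defdiv}. To verify this, take any index $i$ and suppose that $[D_i]\prec[D_j]$ fails, i.e.\ that $[D_i]-[D_j]\in\eff(X)$. If $[D_i]\neq[D_j]$, then $[D_i]-[D_j]$ is a nonzero element of $\eff(X)$, so $\ell([D_i])-\ell([D_j])=\ell([D_i]-[D_j])>0$, contradicting the maximality of $\ell([D_j])$. Hence $[D_i]=[D_j]$, and for every $i$ we have either $[D_i]\prec[D_j]$ or $[D_i]=[D_j]$, which is exactly what the definition requires. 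Note that if several classes attain the maximum, each of them yields a maximal divisor, since two distinct classes with equal $\ell$-value have difference of zero $\ell$-value and hence, by strict positivity, difference not in $\eff(X)$.

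The only non-formal ingredient is the existence of the strictly positive functional $\ell$, and this is where the strict convexity of $\eff(X)$ is indispensable: were $\eff(X)$ to contain a line, some nonzero $\gamma$ would satisfy $\pm\gamma\in\eff(X)$ and no such $\ell$ could exist. For a complete toric variety the effective cone is pointed, so this hypothesis is available to us and the argument goes through verbatim. Beyond this, the proof is purely formal---finiteness of the number of rays guarantees that the maximum defining $D_j$ is attained---so I expect no serious obstacle once the functional is produced.
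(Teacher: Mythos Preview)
Your argument is correct. Both your proof and the paper's exploit the strict convexity (pointedness) of $\eff(X)$ to manufacture an order on the classes $[D_i]$, but they do so differently. The paper picks the facet-defining functionals $\phi_1,\dots,\phi_m$ of $\eff(X)$ and maximizes the tuple $(\phi_1([D_k]),\dots,\phi_m([D_k]))$ in the \emph{lexicographic} order on $\RR^m$; the fact that the $\phi_j$ span the dual (because $\eff(X)$ is full-dimensional) then forces any $[D_i]\neq[D_{k_0}]$ to satisfy $\phi_j([D_i])<\phi_j([D_{k_0}])$ for some $j$, hence $[D_i]-[D_{k_0}]\notin\eff(X)$. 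You instead choose a \emph{single} functional $\ell$ in the interior of the dual cone $\eff(X)^\vee$---whose existence is exactly equivalent to $\eff(X)$ being pointed---and maximize $\ell$ directly. Your route is shorter and uses only the standard fact that a pointed closed cone admits a strictly positive linear functional; the paper's lexicographic device is a more hands-on substitute for this, trading one invocation of convex duality for an explicit construction from the facets.
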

\begin{proof} Let $m$ be the number of $(s-1)$-dimensional faces of $\eff(X)$ and $\phi_1,\dots,\phi_m$ linear operators defining them, i.e., linear morphisms $\phi_i:\RR^s\to \RR$ such that $\phi_i(x)\geq 0$ for every $x\in \eff(X)$ and $\{\phi_i(x)=0\}\cap \eff(X)$ is a $(s-1)$-dimensional face of $\eff(X)$. Now let Y be the space $\RR^m$ equipped with the lexicographic order and consider the function $\phi:\{ 1,\dots,n+s\} \to Y$ defined by 
$$k\mapsto (\phi_1(([D_k]),\dots,\phi_m([D_k])).$$
Clearly, $\phi$ has a maximum at some $k_0$.
Observe that since $\dim_\RR(\eff(X))=s$ the $\phi_i$'s span $\Hom_\RR(\RR^s,\RR)$. In particular, for every $1\leq i \leq n+s$ we have either $[D_i]=[D_{k_0}]$ or $\phi_j(i)<\phi_j(k_0)$ for some $j$. In the latter case, $\phi_j([D_i-D_{k_0}])<0$ and therefore $[D_i]-[D_{k_0}]\notin \eff(X)$. But then $[D_{k_0}]$ is maximal and the proposition follows.
\end{proof}

\begin{obs} If $D_i$ is maximal and linearly equivalent to some $D_j$, then $D_j$ is also maximal.
\end{obs}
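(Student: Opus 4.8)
The plan is to observe that \emph{maximality is a property of the linear equivalence class alone}, so that replacing a divisor by a linearly equivalent one cannot alter it. Concretely, both the relation $\prec$ and the condition appearing in Definition \ref{defdiv} are phrased entirely in terms of the Picard classes $[D_k]$ and never refer to the divisors themselves beyond their classes. This is the whole content of the remark.

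First I would record that $D_i \sim D_j$ means, by definition, that $[D_i] = [D_j]$ in $Pic(X)$. The maximality of $D_i$ asserts that for every $1 \leq k \leq n+s$ one has $[D_k] \prec [D_i]$ or $[D_k] = [D_i]$. Substituting the equality $[D_i] = [D_j]$ into this condition yields: for every $k$, either $[D_k] \prec [D_j]$ or $[D_k] = [D_j]$. This is exactly the statement that $D_j$ is maximal, so the conclusion follows from a single substitution.

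Since there is no genuine obstacle here — the only point is that $\prec$ is already a relation on $Pic(X)$ — the argument reduces to this observation. If one prefers the equivalent global-sections formulation, the same reasoning applies verbatim: for any $k$ one has $\dim H^0(X, \OO(D_k - D_i)) = \dim H^0(X, \OO(D_k - D_j))$ whenever $[D_i] = [D_j]$, because $H^0$ depends only on the isomorphism class of the line bundle. Hence the vanishing condition $\dim H^0(X,\OO(D_k - D_i)) = 0$ for all $D_k \not\sim D_i$ that characterizes the maximality of $D_i$ transfers directly to $D_j$.
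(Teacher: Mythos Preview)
Your argument is correct and is precisely the observation the paper has in mind: maximality, as formulated in Definition~\ref{defdiv} (and equivalently via the vanishing of $H^0(X,\OO(D_k-D_i))$), depends only on the class $[D_i]\in Pic(X)$, so it is invariant under linear equivalence. The paper states this as a remark without proof, and your one-line substitution $[D_i]=[D_j]$ is exactly the intended justification.
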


We will use the notation $\Delta(i)$ for the set of indices $j$ such that $D_j$ is equivalent to $D_i$. 
Of course, maximal divisors behave nicely with respect to taking arbitrary products of toric varieties:

\begin{prop} Let $X_1, X_2$ be smooth complete toric varieties. If $D$ is maximal in $X_1$ then $D\times X_2$ is maximal in $X_1\times X_2$.
\end{prop}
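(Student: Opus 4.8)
The plan is to read off the combinatorics of a product and then verify the global‑sections form of Definition \ref{defdiv} one prime divisor at a time. Writing $D_1,\dots,D_{n+s}$ for the invariant prime divisors of $X_1$ and $E_1,\dots,E_{m+r}$ for those of $X_2$, the fan of $X_1\times X_2$ is the product fan, so its invariant prime divisors are precisely the $D_i\times X_2$ and the $X_1\times E_k$. From the defining sequence applied to the product (using $\mathrm{Div}_T(X_1\times X_2)=\mathrm{Div}_T(X_1)\oplus\mathrm{Div}_T(X_2)$ and $M_{X_1\times X_2}=M_{X_1}\oplus M_{X_2}$) one gets $Pic(X_1\times X_2)\simeq Pic(X_1)\oplus Pic(X_2)$, under which $[D_i\times X_2]=([D_i],0)$ and $[X_1\times E_k]=(0,[E_k])$. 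I would then combine this with the Künneth formula for complete varieties,
\[ H^0(X_1\times X_2,\OO(\alpha,\beta))\simeq H^0(X_1,\OO(\alpha))\otimes H^0(X_2,\OO(\beta)), \]
which in particular identifies $\eff(X_1\times X_2)$ with $\eff(X_1)\times\eff(X_2)$ and reduces every section count on the product to the two factors.

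With $D=D_j$ maximal in $X_1$, I would check that $\dim H^0(X_1\times X_2,\OO(G-D\times X_2))=0$ for every invariant prime divisor $G\not\sim D\times X_2$, in two cases. For $G=D_i\times X_2$ with $D_i\not\sim D$, Künneth gives $H^0(X_1\times X_2,\OO((D_i-D)\times X_2))\simeq H^0(X_1,\OO(D_i-D))$, which vanishes exactly by maximality of $D$ in $X_1$. For $G=X_1\times E_k$ one has
\[ H^0(X_1\times X_2,\OO(G-D\times X_2))\simeq H^0(X_1,\OO(-D))\otimes H^0(X_2,\OO(E_k)), \]
so it suffices to prove $H^0(X_1,\OO(-D))=0$, i.e. $-[D]\notin\eff(X_1)$.

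The hard—indeed the only non‑formal—part is this last vanishing, and I would deduce it from the strict convexity of $\eff(X_1)$. First I would rule out $[D]=0$: since $X_1$ is complete its fan has at least $\dim X_1+1\ge 2$ rays, so $s=\mathrm{rank}\,Pic(X_1)\ge 1$ and $\eff(X_1)$ is an $s$‑dimensional (hence nonzero) cone, whence some generator $[D_i]\neq 0$; if $[D]$ were $0$, maximality of $D$ would force $[D_i]\prec 0$, i.e.\ $[D_i]\notin\eff(X_1)$, contradicting that $[D_i]$ generates $\eff(X_1)$. Thus $[D]\in\eff(X_1)$ is nonzero, and since $\eff(X_1)$ contains no nontrivial subspace we cannot also have $-[D]\in\eff(X_1)$; therefore $H^0(X_1,\OO(-D))=0$. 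This settles the case $G=X_1\times E_k$, and moreover $[D]\neq 0$ shows $[X_1\times E_k]=(0,[E_k])\neq([D],0)$, so these divisors are never linearly equivalent to $D\times X_2$ and are legitimately covered by the maximality condition. Together the two cases show $D\times X_2$ is maximal in $X_1\times X_2$.
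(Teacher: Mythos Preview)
Your proof is correct and follows essentially the same approach as the paper: identify the $T$-invariant prime divisors of the product and apply K\"unneth to reduce the section counts to the factors. The paper's argument is terse and leaves the case $G=X_1\times E_k$ implicit, whereas you carefully supply the missing step that $H^0(X_1,\OO(-D))=0$ via strict convexity of $\eff(X_1)$ together with $[D]\neq 0$; this is a genuine detail worth spelling out, but the overall strategy is the same.
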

\begin{proof} Recall that the toric variety $X_1\times X_2$ has $T_1\times T_2$ as open torus (here $T_1$ and $T_2$ stand for the respective tori) with the natural action. 
Clearly, every $T_1\times T_2$-invariant divisor is of the form $D\times X_2$ or $X_1\times D$ for some $T_i$-invariant divisor $D$. 
With this in mind, the proposition follows from Künneth's formula.
\end{proof}

\begin{ej} The projective space $\PP^n$ has one unique (maximal) class of $T$-divisors.
\end{ej}

\begin{ej} \label{blowup2}From Example \ref{blowup} we know that the Cox ring of $Bl_p(\PP^n)$ is $\CC[x_1,\dots,x_{n+2}] $ with grading $\deg(x_i) =(1,0)$, $\deg(x_{n+1})=(0,1)$ and $\deg(x_{n+2})=(1,1)$. In this case, we see that the only maximal $T$-divisor is $D_{n+2}\simeq \PP^{n-1}$, which happens to be the only $T$-invariant hyperplane in $\PP^n$ such that $p\notin D_i$. Moreover, $D_{n+2}$ is numerically effective and by Batyrev-Borisov Vanishing we have $h^1(X,\OO_X(-D_{n+2}))=0$.
\end{ej}

This last example is in fact a special case of a general phenomenon, as the following proposition shows.

\begin{prop} Let $X$ be a smooth complete toric variety of dimension $n\geq 2$. If $D$ is maximal in $X$ and $p\in X\setminus D$ is fixed by the torus action, then $D$ is maximal in $Bl_p(X)$. 
\end{prop}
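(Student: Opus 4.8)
The plan is to translate the statement entirely into Cox coordinates and effective classes, and to control global sections on the blow-up by pushing forward to $X$ along the blow-down $\pi\colon Bl_p(X)\to X$. First I would fix the combinatorics. Since $p$ is a torus-fixed point of the smooth variety $X$, it corresponds to a maximal cone $\sigma=\mathrm{Cone}(u_1,\dots,u_n)$ whose rays form a $\ZZ$-basis, and the $T$-divisors passing through $p$ are exactly $D_1,\dots,D_n$. Because $p\notin D$, after reindexing I may assume $D=D_k$ with $k>n$. The blow-up is the star subdivision of $\sigma$, which introduces a single new ray $u_0=u_1+\dots+u_n$, hence a single new $T$-divisor, the exceptional $E$. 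This gives $Pic(Bl_p(X))=\pi^*Pic(X)\oplus\ZZ[E]$, and the standard formula $\pi^*D_i=\widetilde D_i+\mathrm{mult}_p(D_i)\,E$ yields the degrees $[\widetilde D_i]=\pi^*[D_i]-[E]$ for $i\le n$ and $[\widetilde D_j]=\pi^*[D_j]$ for $j>n$ (in particular $[\widetilde D_k]=\pi^*[D_k]$).

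The technical input, which I regard as the heart of the argument, is a formula for sections on the blow-up. Writing an arbitrary class as $\pi^*\alpha-mE$ and combining the projection formula with the computation of $\pi_*\OO_{Bl_p(X)}(-mE)$ — which equals $\OO_X$ when $m\le 0$ (here one uses $n\ge 2$, so $\{p\}$ has codimension $\ge 2$, together with normality of $X$) and equals the ideal of functions vanishing to order $\ge m$ at $p$ when $m>0$ — I obtain
\[
H^0(Bl_p(X),\pi^*\alpha-mE)\;=\;
\begin{cases}
S_\alpha, & m\le 0,\\
\{\,s\in S_\alpha:\ \ord_p(s)\ge m\,\}, & m>0.
\end{cases}
\]

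With this in hand, maximality of $\widetilde D_k$ reduces to checking $H^0(Bl_p(X),[D']-[\widetilde D_k])=0$ for each $T$-divisor $D'\not\sim\widetilde D_k$, and I would run through the three types. For $D'=E$ the difference is $\pi^*(-[D_k])+E$, so $m=-1$ and the relevant space is $S_{-[D_k]}$, which vanishes because $[D_k]$ is a nonzero effective class and $\eff(X)$ is strictly convex. For $D'=\widetilde D_l$ with $l>n$, $l\ne k$, the difference is $\pi^*([D_l]-[D_k])$ with $m=0$, so the space is $S_{[D_l]-[D_k]}$; if $[D_l]\ne[D_k]$ this is $0$ by maximality of $D_k$ in $X$, and if $[D_l]=[D_k]$ then $\widetilde D_l\sim\widetilde D_k$ and there is nothing to check.

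The interesting type is $D'=\widetilde D_l$ with $l\le n$, where the difference is $\pi^*([D_l]-[D_k])-E$ with $m=1$; this is where I expect the real content to lie. When $[D_l]\ne[D_k]$ maximality in $X$ already forces $S_{[D_l]-[D_k]}=0$, but the subtle case is $[D_l]=[D_k]$, in which $\alpha=0$ and the space becomes $\{s\in\CC:\ \ord_p(s)\ge 1\}=0$. This case is exactly where the hypotheses enter: such a $\widetilde D_l$ is never linearly equivalent to $\widetilde D_k$ (their $E$-components differ), so it genuinely must be tested, and it is the exceptional divisor — via the $-E$ term and the condition $p\notin D$ — that breaks a tie which maximality on $X$ cannot see. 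Assembling the three cases shows every comparison vanishes, so $D=\widetilde D_k$ is maximal in $Bl_p(X)$.
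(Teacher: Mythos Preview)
Your argument is correct and shares the paper's combinatorial setup (star subdivision of the cone of $p$, identification of strict transforms and of the exceptional divisor $E$), but the technical device differs. The paper argues by restriction across the isomorphism $Bl_p(X)\setminus E\simeq X\setminus\{p\}$: for every $T$-divisor $D_i$ it produces an injection $H^0(Bl_p(X),\OO(\wt D_i-\wt D))\hookrightarrow H^0(X,\OO(D_i-D))$ and declares the target zero ``by hypothesis'', then handles $E$ separately by restricting sections of $\OO(E-D)$ to $\Gamma(X,\OO(-D))=0$. You instead compute $H^0(Bl_p(X),\pi^*\alpha-mE)$ directly via the projection formula and the identification of $\pi_*\OO(-mE)$ with $\OO_X$ or $\mathfrak m_p^{\,m}$.

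Your extra work buys something real in exactly the case you flag as ``the subtle one'': when $l\le n$ and $[D_l]=[D_k]$ in $Pic(X)$ (this already occurs for $X=\PP^n$, where every $T$-hyperplane through $p$ has the same class as $D$). There $\wt D_l\not\sim\wt D_k$ on the blow-up, so vanishing must be checked, yet the paper's target $H^0(X,\OO(D_l-D_k))\simeq\CC$ is \emph{not} zero and maximality on $X$ says nothing; the paper's proof glosses over this. Your identification of the relevant space with $\{s\in\CC:\ord_p(s)\ge 1\}=0$ closes this gap cleanly. (The paper's approach can be repaired here too, since $\wt D_l-\wt D_k\sim -E$ and $H^0(Bl_p(X),\OO(-E))=0$ on a complete variety, but your treatment is the more careful of the two.)
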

\begin{proof} Without loss of generality, we can assume that $p$ is the distinguished point corresponding to the cone $\sigma=Cone(e_1,\dots,e_n)$. In this context, the hypothesis
$p\notin D$ is equivalent to $D$ not being the divisor associated to any of the $e_i$'s. Recall that the blow-up $Bl_p(X)\to X$ can be constructed via the star-subdivision of $\sigma$, so 
the only additional $T$-divisor associated to the new rays is the exceptional divisor $E$ associated to the ray generated by $e_1+\dots+e_n$. Since the isomorphism $Bl_p(X)\setminus E\to X\setminus 
\{p\}$ maps each $D_i$ into itself, the restriction of rational functions induces an injection 
$$H^0(Bl_p(X), \OO_{Bl_p(X)}(D_i-D))\to H^0(X, \OO_X(D_i-D)),$$
which is zero by hypothesis. As for the exceptional divisor $E$, the global sections of $\OO_{Bl_p(X)}(E-D)$ restrict to elements in $\Gamma(X,\OO_X(-D))=0$, which is also zero since $X$
is complete.
\end{proof}

\begin{obs} With the notation of the previous proposition, if $D$ is numerically effective in $X$ then the same holds in $Bl_p(X)$.
\end{obs}

\begin{obs}\label{DS} If $\{ D_i\}_{i\in S}$ is a set of $T$-divisors such that $D_S:=\cap_{i\in S} D_{i}$ is not empty and $\tau=Cone(\rho_i \vert i\in S)$, then $D_S$ is the smooth complete toric variety associated to the fan 
$$Star(\tau)=\{ \overline{\sigma} \hspace{0.2cm} \vert \hspace{0.2cm} \tau\leq \sigma .\}$$
in the quotient lattice $N(\tau)=N/\langle N\cap \tau\rangle$. In particular, the $T_{N(\tau)}$- divisors of $D_S$ are exactly the intersections with the other $T$-divisors in $X$. We will use the notation $\pi_S: \CC^{m_S}\setminus Z_S\to D_S$ for the Cox quotient of $D_S$.
\end{obs}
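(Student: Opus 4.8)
The plan is to recognize $D_S$ as a torus-orbit closure and then invoke the standard description of such closures as toric varieties (the Orbit--Cone correspondence, [Theorem 3.2.6 and Proposition 3.2.7, \cite{CLS}]). First I would recall the basic dictionary: writing $O(\sigma)$ for the torus orbit attached to a cone $\sigma\in\Sigma$ and $V(\sigma)=\overline{O(\sigma)}$ for its closure, each invariant prime divisor is such a closure, $D_i=V(\rho_i)$ for the corresponding ray $\rho_i$, and one has the stratification $V(\sigma)=\bigcup_{\sigma\leq\gamma}O(\gamma)$, where the union runs over all cones of $\Sigma$ having $\sigma$ as a face.

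The key step is to identify the intersection $D_S=\bigcap_{i\in S}V(\rho_i)$ with the orbit closure $V(\tau)$. A point of $X$ lies in every $V(\rho_i)$ precisely when it belongs to some orbit $O(\gamma)$ with $\rho_i\leq\gamma$ for all $i\in S$. The fan axioms ensure that the cones of $\Sigma$ containing all the $\rho_i$ are closed under intersection, so as soon as $D_S\neq\emptyset$ there is a unique minimal such cone $\gamma_0\in\Sigma$. Because $X$ is smooth, $\gamma_0$ is simplicial and every subset of its rays spans a face; in particular $\tau=Cone(\rho_i\mid i\in S)$ is a face of $\gamma_0$, hence itself a cone of $\Sigma$ containing all the $\rho_i$, and minimality forces $\tau=\gamma_0\in\Sigma$. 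It follows that $D_S=\bigcup_{\tau\leq\gamma}O(\gamma)=V(\tau)$. I expect this passage — translating the nonemptiness of the intersection of divisors into the assertion that $\tau$ is genuinely a cone of the fan — to be the only delicate point; smoothness is exactly what makes it work.

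With $D_S=V(\tau)$ established, the description of $D_S$ as a toric variety is precisely [Proposition 3.2.7, \cite{CLS}]: the orbit closure $V(\tau)$ is the toric variety of the fan $Star(\tau)=\{\overline{\sigma}\mid\tau\leq\sigma\}$ in the quotient lattice $N(\tau)=N/\langle N\cap\tau\rangle$, and both completeness and smoothness are inherited from $X$. For the final assertion I would trace which cones produce the invariant divisors of $D_S$: the rays of $Star(\tau)$ are the images $\overline{\sigma}$ of the cones $\sigma\in\Sigma$ with $\tau\leq\sigma$ and $\dim\sigma=\dim\tau+1$. Each such $\sigma$ has the form $Cone(\tau,\rho)$ for a unique ray $\rho\in\Sigma(1)$ not contained in $\tau$, and by the computation of the previous paragraph the associated $T_{N(\tau)}$-divisor of $D_S$ is $V(\sigma)=V(\tau)\cap V(\rho)=D_S\cap D_\rho$. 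Hence the invariant divisors of $D_S$ are exactly its intersections with the $T$-divisors of $X$, which is what the remark claims.
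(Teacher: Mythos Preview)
Your argument is correct. The paper itself offers no proof of this remark: it is recorded as a standard fact from toric geometry and left unjustified, with the reference \cite{CLS} implicitly doing the work. Your proposal therefore does not so much differ from the paper's approach as supply what the paper omits.

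The substance of your argument is sound. The only genuinely nontrivial step --- that the nonemptiness of $D_S$ forces $\tau=Cone(\rho_i\mid i\in S)$ to actually lie in $\Sigma$ --- is handled correctly via smoothness (any cone of $\Sigma$ containing all the $\rho_i$ is simplicial, so the subcone on those rays is a face and hence a member of the fan). Once $D_S=V(\tau)$ is established, the rest is a direct citation of the orbit--cone correspondence and the description of orbit closures in \cite{CLS}. One small refinement: your final paragraph correctly identifies the $T_{N(\tau)}$-divisors of $D_S$ with the intersections $D_S\cap D_\rho$ for those rays $\rho$ with $Cone(\tau,\rho)\in\Sigma$; strictly speaking these are the \emph{nonempty} intersections, since $D_S\cap D_\rho=\emptyset$ when $\rho$ and $\tau$ do not span a cone of $\Sigma$. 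The paper's phrasing glosses over this, and the text immediately following the remark (``the data concerning the divisors not meeting $D_S$ can be dropped'') confirms that this is the intended reading.
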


Recall that the preimage of the divisor $D_i$ under the quotient morphism $\pi$ is given by the equation $\{x_i=0\}\subseteq \CC^{n+s}\setminus Z$. For every set $\{ D_i\}_{i\in S}$
consisting of maximal elements we will consider a specific type of  projections $X\dashrightarrow D_S$. Of course, we need to assume that this intersection 
is not empty (or equivalently, that there exists some cone $\sigma\in\Sigma$ containing the corresponding rays). If $\{ T_j\}_{j\notin S}$ are linearly independent operators in $\CC^{n+s}$ 
satisfying $\deg(T_j)=\deg(x_j)$ (this is, $T_j$ depends only of the variables in $\Delta(j)$) we can define the projection $p: \CC^{n+s}\to V(x_k|k\in S)$ such that for every $x\in\CC^{n+s}$ we have
\[  p(x)_j=
        \begin{dcases*}
                T_j(x) & if $j\notin S$\\
                0       & if $j\in S$.
        \end{dcases*}
\]
The hypothesis on the degrees guarantees that this morphism is in fact equivariant. Indeed, for $g\in G$ and $j\notin S$ we have
\[ p( g\cdot x)_j=T_j(g\cdot x) = \chi^{[D_j]}(g) T_j(x) \]
and therefore $p(g\cdot x)=g\cdot p(x)$. This means that $p$ descends to $X$, i.e., we have a commutative diagram of the form 
\[
\xymatrix{
\CC^{n+s}\setminus Z \ar[d]^\pi \ar@{-->}[r]^p & V(\{x_i\}_{i\in S})\setminus Z  \ar[d]^\pi\\
X \ar@{-->}[r] &  D_S. }
\]
In fact,  by Remark \ref{DS} the data concerning the divisors not meeting $D_S$ can be dropped: the morphism $p:X\to D_S$ lifts (in Cox coordinates) to the map 
$$\hat{p}: \CC^{n+s}\setminus Z \dashrightarrow \CC^{m_S}\setminus Z_S$$
induced by the $T_i$'s such that $D_i\cap D_S\neq \emptyset$. The affine set $B(p)\subseteq \CC^{n+s}\setminus Z$ where $p$ is not defined is exactly $p^{-1}(Z_S)$. 
In particular, its codimension is greater or equal than 2 if $p$ is generic. 
\begin{deff} Let $S\subseteq \{ 1,\dots.n\}$ and $p: X\dashrightarrow D_S$ be a dominant morphism. We will say that $p$ is an \textit{equivariant linear projection} if it can be described as above.
\end{deff}

Now let $\omega$ be a twisted differential form in $D_S$ and consider its pullback $p^*\omega$. Observe that after an \textit{equivariant} change of coordinates (i.e., an automorphism of $X$) we can suppose that $p$ is the standard projection, so we can conclude that $\codim (  S(p^*\omega))=\codim(S(\omega))$.

Now we are able to describe foliations whose splitting type involves maximal elements.
Combining Theorem \ref{stab1} and Theorem \ref{stab2} with the definitions above, we are able to point out some specific (not just "split")
irreducible components of the moduli space of foliations. Keep in mind that every foliation in a surface is split.

\begin{corollary}\label{pb}  Let $X$ be a complete toric variety of dimension $n\geq 3$ and $\{D_i\}_{i\in S}$ a set of maximal elements such that
$\dim (D_S) \geq 2$. Let $\beta\in Pic(D_S)$ and $\mathcal{C}\subseteq \FF_q(D_S, \beta )$ an irreducible component. Let $\alpha\in Pic(X)$ be the pullback of $\beta$ by a generic equivariant linear projection. If the generic element of $\mathcal{C}$ satisfies the hypotheses of Theorem \ref{stab1} (for $q=1$) or Theorem \ref{stab2} (for $q>1$)
then there exists an irreducible component of $\FF_q(X,\alpha)$ such that its generic element is a linear pullback of an element of $\C$. 
\end{corollary}
\begin{proof} Without loss of generality, let us assume that $S=\{ 1,\dots,d \}$ and that the invariant divisors meeting $D_S$ are $D_1,\dots,D_{m_S+d}$. For a generic equivariant linear projection $p$ and a generic element $\GG\in \C$ satisfying
\[\TT\GG \simeq \bigoplus_{i=1}^{n-d-q} \OO_{D_S}(\alpha_i), \]
the tangent sheaf of its pullback $\FF=p^*\GG$ is
\[ \TT\FF\simeq \left(\bigoplus_{i=1}^{n-d-q} \OO_X(p^*\alpha_i)\right)\oplus \left(\bigoplus_{i=1}^{d}\OO_X(D_i)\right),  \]
where the new terms correspond to the fibers of $p$. We will now explain this claim: after the \textit{equivariant} change of coordinates $x_j\mapsto T_j(x)$ for $j\notin S$ we can suppose that $p$ is the standard projection.  Let  $\ov{Z}_1,\dots ,\ov{Z}_{n-d-q}$ be twisted vector fields of degree $\deg(\ov{Z}_k)=-\alpha_k\in Pic(D_S)$ inducing the splitting of $\TT\GG$. By hypothesis, these vector fields must admit a description by polynomial homogeneous (with the grading of the Cox ring of $D_S$, with variables $z_{1},\dots,z_{m_S}$) vector fields of the form
\[ \overline{Z}_k=\sum_{j=1}^{m_S}B_j^k(z) \frac{\partial}{\partial z_j} . \]
The description of $p$ in homogeneous coordinates $\CC^{n+s}\setminus Z\dashrightarrow \CC^{m_S}\setminus Z_S$ is the morphism expressed by the formula $\hat{p}(x_1,\dots,x_{n+s})=(x_{d+1},\dots,x_{m_s+d})$. Being the square 
\[
\xymatrix{
 V(\{x_i\}_{i\in S})\setminus Z \ar[d]^\pi \ar@{-->}[r]^p & \CC^{m_S}\setminus Z_S \ar[d]^{\pi_S}\\
D_S \ar@{=}[r] &  D_S }
\]
 commutative, the tangent sheaf of $(\pi\vert_{V(\{x_i\}_{i\in S})})^*\GG$ is generated by the radial vector fields $R_1,\dots,R_s$ and the elements 
\[ Z_k=\sum_{j=d+1}^{m_S+d}B_j^k(x_{d+1},\dots,x_{m_s+d}) \frac{\partial}{\partial x_j} \]
of degree $\deg(Z_k)=-p^*\alpha_k\in Pic(X)$. Since every leaf of $\pi^*\FF$ is a cone with center at a leaf of $(\pi\vert_{V(\{x_i\}_{i\in S})})^*\GG$, for every regular point $x$ the stalk $\TT\FF_x$ must be freely generated by the set $\{ Z_1,\dots, Z_{n-d-q},\frac{\partial}{\partial x_ 1},\dots,\frac{\partial}{\partial x_ d}\}$. Thus these polynomial vector fields induce the claimed splitting of $\TT\FF$.

On the other hand, if $\GG$ is the foliation associated to a differential $q$-form of degree $\beta\in Pic(D_S)$ then $\FF$ is induced by the differential form $p^*\omega$ of degree $p^*\beta\in Pic(X)$. It follows that $\FF$ is the foliation associated to the element 
\[\ii_{\frac{\partial}{\partial x_1}}\dots\ii_{\frac{\partial}{\partial x_d}} \ii_{Z_1},\dots,\ii_{Z_{n-d-q}}\ii_R\Omega \in H^0(X,\Omega^1_X(p^*\beta)). \] 
The discussion above the corollary implies that the codimensions of 
$S(\GG)$ and $S(\GG)\setminus \ov{K(\GG)}$ coincide with the codimensions of $S(\FF)$ and $S(\FF)\setminus \ov{K(\FF)}$ respectively. 
In particular, if $S(\GG)$ satisfies the hypothesis of Theorem \ref{stab1} or Theorem \ref{stab2}, then so does $S(\FF)$.

Now by Remark \ref{obs} if $\FF'$ is sufficiently close to $\FF$ in $\FF_q(X,\alpha)$, then it must be defined by an element  
\[ \omega'=\ii_{\X_1}\dots\ii_{\X_d}\ii_{Y_{d+1}}\ii_{Y_{n-q}}\ii_R\Omega,\]
for polynomial vector fields of degree $\deg(\X_i)=-[D_i]$ and $\deg(Y_k)=-p^*\alpha_k$. The first vector fields must be of the form
\[ \X_i=\sum_{j=1}^{n+s} g_j^i(x) \frac{\partial}{\partial x_j} \]
for homogeneous polynomials $g_j^i$ satisfying $\deg(g_j^i)-[D_j]=-[D_i]$. Our hypothesis on $\Delta(i)$ being maximal simplifies the situation in the following way:
 since $g_j^i\in H^0(X,\OO_X(D_j-D_i))$, we have $g_j^i\in\CC$. Moreover, $g_j^i=0$ for  every $j\notin \Delta(i)$.

Consider the $d \times (n+s)$ matrix $M(\X)$ with rows $g^i_1,\dots,g^i_{n+s}$. There must 
be some subset of $\{1,\dots,n+s\}$ of size $d$ for which the corresponding minor does not vanish (otherwise the tangent sheaf would not have the expected rank). 
This means that after an equivariant change of coordinates we can assume that our vector fields satisfy
$\{\frac{\partial}{\partial x_j}\}_{j\in S}= \{\X_1,\dots,\X_{d}\}$. Without loss of generality, we can also suppose that the vector fields $Y_i$ corresponding to the other terms in 
the splitting type of $\FF$ are orthogonal to the $\X_i$'s. Since this last condition is maintained under Lie bracket, these vector fields define a sub-foliation $\GG''$ of $\FF$ 
whose leaves are parallel to $V(x_i|i\in S)$. Taking $\GG'=\GG''|_{V(x_i|i\in S)}$ we get a foliation in $D_S$ satisfying $S(\FF')=q^{-1}(S(\GG'))$, where $q$ stands
for the standard projection. The foliation $\GG'$ satisfies $\codim(S(\GG'))=\codim(S(\FF'))$ and $\TT\FF'|_{X\setminus S(q)}=\TT q^*\GG'|_{X\setminus S(q)}$ so we must have $\FF'=q^*\GG'$.
\end{proof}

Actually, the proof of Corollary \ref{pb} contains a characterization of the split foliations which can be obtained as pullback by equivariant projections: 

\begin{prop}Let $X$ be a complete toric variety of dimension $n\geq 3$ and $\FF$ a foliation in $X$. Suppose further that 
\[ \mathcal{T}\FF\simeq \left(\bigoplus^{n-|S'|-q}_{j=1} \OO_X(\beta_j)\right)\oplus \left( \bigoplus_{i\in S'}\OO_X(D_i)\right) \]
for some set $\{ D_i\}_{i\in S'}$ consisting of maximal divisors such that for every $i\in S'$ we have  $h^1(X,\OO_X(-D_i))=0$.
Then there exists a set $\{D_j \}_{j\in S}\subseteq\bigcup_{i\in S'}\Delta(i)$ with $|S|\leq|S'|$, a foliation $\mathcal{G}$ in $D_S$ ($\dim D_S\geq 2$),  and an equivariant linear projection $p:X\to D_S $ such that $\FF=p^*\mathcal{G}$.
\end{prop}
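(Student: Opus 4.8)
The plan is to isolate the reconstruction already performed inside the proof of Corollary~\ref{pb}, this time taking as input only a foliation $\FF$ with the prescribed splitting type. The first step is to rigidify the ``vertical'' summands. Each factor $\OO(\deg x_i)$, $i\in S'$, is the image of a morphism $\OO_X\to\TT X(-\deg x_i)$, i.e. a homogeneous vector field $\X_i\in H^0(X,\TT X(-\deg x_i))$; writing $\X_i=\sum_j g^i_j\,\partial/\partial x_j$ in Cox coordinates, the Euler sequence forces $g^i_j\in H^0(X,\OO(D_j-D_i))$. Here the maximality of $D_i$ (Definition~\ref{defdiv}) is decisive: for each $j$ one has either $[D_j]\prec[D_i]$, whence $D_j-D_i\notin\eff(X)$ and $g^i_j=0$, or $j\in\Delta(i)$, whence $\deg g^i_j=0$ and $g^i_j\in\CC$. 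Thus each $\X_i$ is a constant-coefficient field supported on the coordinates indexed by $\Delta(i)$.

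Next I would normalize coordinates. As part of a generating set of the saturated subsheaf $\TT\FF\subseteq\TT X$, the fields $\{\X_i\}_{i\in S'}$ are $\CC$-linearly independent, and since each is supported on a single class they span a space $W$ of constant fields with $\dim W=:|S|\le|S'|$. After an equivariant change of coordinates (exactly as in the proof of Corollary~\ref{pb}) I may assume $\{\partial/\partial x_j\}_{j\in S}\subseteq\TT\FF$ for an index set $S\subseteq\bigcup_{i\in S'}\Delta(i)$.

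It remains to descend and to identify $\FF$ as a pullback. The coordinate fields $\partial/\partial x_j$, $j\in S$, are tangent to $\FF$ and pairwise commute, so involutivity gives $[\partial/\partial x_j,\TT\FF]\subseteq\TT\FF$; hence $\TT\FF$ is invariant under the commuting translations in these directions. Consequently $\FF$ is the pullback under the linear projection $q:\CC^{n+s}\dashrightarrow V(x_j:j\in S)$ of a foliation on the target, generated by the images of the remaining fields (which one may take independent of the $x_j$, $j\in S$). Passing to Cox quotients via Remark~\ref{DS} identifies $V(x_j:j\in S)$ with $D_S$, turns $q$ into an equivariant linear projection $p:X\dashrightarrow D_S$, and yields a foliation $\GG$ on $D_S$. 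Finally I would upgrade the generic agreement to an equality $\FF=p^*\GG$: both $\TT\FF$ and $\TT p^*\GG$ are saturated subsheaves of the locally free $\TT X$, hence reflexive, and they agree off the union of the indeterminacy locus of $p$ with $p^{-1}(S(\GG))$, a set of codimension $\ge2$, so they coincide.

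The rigidity computation and the normalization are essentially forced; the main obstacle is the descent. Concretely, one must ensure that the selected directions define a genuine equivariant linear projection, i.e. that $D_S=\bigcap_{j\in S}D_j$ is a nonempty toric subvariety of dimension $n-|S|\ge2$ (equivalently that $\{\rho_j\}_{j\in S}$ spans a cone of $\Sigma$), and that the identification is an equality of sheaves rather than a merely generic one. The dimension bound is automatic when $q\ge2$, since $\dim D_S=\mathrm{rank}\,\GG+q\ge q\ge2$; when $q=1$ one simply refrains from including vertical directions that would force $\dim D_S<2$ --- such directions are reabsorbed into $\GG$ --- which is exactly what produces the inequality $|S|\le|S'|$ in place of equality. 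The exactness of $\FF=p^*\GG$ then follows, as in Corollary~\ref{pb}, from saturation together with $\codim S(\FF)\ge2$.
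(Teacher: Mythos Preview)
Your proposal is correct and follows essentially the same route as the paper: the paper's own proof is the single sentence ``repeat the argument in the previous proof, but we may have to pick a smaller $S$ in order to guarantee that $\dim\left(\bigcap_{i\in S} D_i\right)\ge 2$,'' and your write-up is precisely an unpacking of that sentence, down to the reason for the inequality $|S|\le|S'|$. The one place where your exposition wobbles is the double explanation for $|S|\le|S'|$: you first set $|S|:=\dim W$ right after asserting the $\X_i$ are $\CC$-linearly independent (which would give $|S|=|S'|$), and only later give the real reason---dropping vertical directions when needed to keep $\dim D_S\ge 2$---which matches the paper; it would be cleaner to use only the second explanation.
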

\begin{proof} We can repeat the argument in the previous proof, but we may have to pick a smaller $S$ in order to guarantee that $\dim\left(\bigcap_{i\in S} D_i\right)\geq 2$.
\end{proof}

\begin{obs} The set $S$ may not be unique: the same foliation could be a pullback from two non-isomorphic (although birational) $T$-divisors at the same time.
\end{obs}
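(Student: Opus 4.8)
The statement is a \emph{non}-uniqueness claim, so the natural strategy is to exhibit a single foliation admitting two genuinely different pullback presentations whose targets are non-isomorphic. The plan is to produce such an example on a product of projective spaces of \emph{different} dimensions, where two independent ``projection from a linear centre'' structures are available. Concretely, I would take $X=\PP^3\times\PP^2$, with Cox coordinates $x_1,\dots,x_4$ on the first factor and $y_1,y_2,y_3$ on the second, so that $\deg x_i=(1,0)$ and $\deg y_j=(0,1)$ and $\eff(X)$ is the first quadrant. Both classes are maximal, since $(0,1)-(1,0)$ and $(1,0)-(0,1)$ are not effective. I would then take $\FF$ to be the codimension $3$ foliation with
\[ \TT\FF=\langle\,\partial_{x_1}\,\rangle\oplus\langle\,\partial_{y_1}\,\rangle\simeq\OO_X(\deg x_1)\oplus\OO_X(\deg y_1), \]
that is, the foliation integrating the pencil of lines through $[1:0:0:0]$ in the $\PP^3$ factor together with the pencil of lines through $[1:0:0]$ in the $\PP^2$ factor. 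Its leaves are the products $\{[\ast:a_2:a_3:a_4]\}\times\{[\ast:b_2:b_3]\}\cong\PP^1\times\PP^1$, and $\FF$ visibly has the split form required by the preceding proposition, with $S'=\{x_1,y_1\}$.

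The point is that this one $\FF$ factors through two maximal intersections of incomparable shape. On the one hand, projecting the first factor away from $[1:0:0:0]$ is an equivariant linear projection $p\colon X\dashrightarrow D_{\{x_1\}}=\PP^2\times\PP^2$ whose fibres are precisely the $x_1$-pencils; taking $\GG$ to be the lines-through-$[1:0:0]$ foliation on the second $\PP^2$ one checks $\FF=p^*\GG$, since $p^{-1}$ of a leaf of $\GG$ is a leaf of $\FF$. On the other hand, projecting the second factor away from $[1:0:0]$ gives $p'\colon X\dashrightarrow D_{\{y_1\}}=\PP^3\times\PP^1$ with the $y_1$-pencils as fibres, and with $\GG'$ the lines-through-$[1:0:0:0]$ foliation on the $\PP^3$ factor the same computation yields $\FF=(p')^*\GG'$. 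Thus $S=\{x_1\}$ and $S=\{y_1\}$ are two valid outputs of the proposition for the same $\FF$.

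It remains to see that the two targets are non-isomorphic but birational. Both $\PP^2\times\PP^2$ and $\PP^3\times\PP^1$ are rational fourfolds, hence birational to each other; but $\PP^2\times\PP^2\not\cong\PP^3\times\PP^1$, the unordered pair of factor dimensions being an isomorphism invariant of a product of projective spaces. Hence $D_{\{x_1\}}$ and $D_{\{y_1\}}$ are genuinely distinct presentations, which is exactly the asserted phenomenon.

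The one step that needs care -- and the reason the two factors must both have dimension $\ge 2$ and be of \emph{different} dimension -- is verifying that each fibre direction contributes a \emph{saturated} summand $\OO_X(\deg x_1)$, respectively $\OO_X(\deg y_1)$, so that both presentations really match the template $\TT\FF\simeq(\bigoplus\OO(\beta_j))\oplus(\bigoplus_{i\in S}\OO(\deg x_i))$. This is where the pencil structure is essential: $\partial_{x_1}$ vanishes (as a section of $\TT X(-\deg x_1)$) only along the centre $\{x_2=x_3=x_4=0\}$, of codimension $3$, and $\partial_{y_1}$ only along $\{y_2=y_3=0\}$, of codimension $2$, so each generates a saturated line subbundle of $\TT X$ and $\codim S(\FF)\ge 2$. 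Had one factor been a $\PP^1$, the corresponding ``pencil'' would be the entire factor and the generating field would vanish in codimension $1$, forcing a summand of the form $\OO(2\deg)$ and breaking the template. Checking this saturation on both sides is the main obstacle, and the choice $X=\PP^3\times\PP^2$ is precisely what is needed to dodge it.
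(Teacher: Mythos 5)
The paper never proves this remark at all: it is left as an unproved aside following the characterization of pullback foliations, so the only thing to assess is whether your construction is correct --- and it is. On $X=\PP^3\times\PP^2$ both classes $(1,0)$ and $(0,1)$ are maximal in the sense of Definition \ref{defdiv}, since $\eff(X)$ is the first quadrant and each difference of the two classes leaves it; the rank-two foliation $\TT\FF=\langle \partial_{x_1}\rangle\oplus\langle\partial_{y_1}\rangle$ is visibly integrable and fits the template of the characterization proposition with $S'$ of size two and no $\beta_j$ summands ($n-|S'|-q=5-2-3=0$). The coordinate projections are equivariant linear projections in the paper's sense (take $T_j=x_j$), and your fibre computation correctly gives $\FF=p^*\GG=(p')^*\GG'$ with targets $D_{\{x_1\}}\simeq\PP^2\times\PP^2$ and $D_{\{y_1\}}\simeq\PP^3\times\PP^1$: honest $T$-divisors that are birational (both rational fourfolds) but non-isomorphic, exactly the asserted phenomenon. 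Your saturation check is also the right one in this framework, provided you make explicit that vanishing is computed modulo the radial fields: $\partial_{x_1}$ lies in $\langle R_1,R_2\rangle$ precisely along $\{x_2=x_3=x_4=0\}$ (codimension $3$) and $\partial_{y_1}$ along $\{y_2=y_3=0\}$ (codimension $2$), so both summands are saturated and $\codim S(\FF)\geq 2$; note the characterization proposition imposes no further codimension hypotheses, so the codimension-two component of $S(\FF)$ is harmless for the remark (it would only matter if you wanted to invoke the stability theorems). Two minor polish points: the claim that the unordered factor dimensions are an isomorphism invariant of a product of projective spaces deserves one line of justification --- for instance $\dim H^4(\PP^2\times\PP^2,\CC)=3$ while $\dim H^4(\PP^3\times\PP^1,\CC)=2$ --- and your closing observation about a $\PP^1$ factor forcing a summand $\OO(2\deg)$ (the saturation being the full tangent bundle of the $\PP^1$ fibres) is correct and explains well why both factors must have dimension at least $2$, though choosing \emph{different} dimensions is only needed to make the two targets non-isomorphic, not for saturation.
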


\begin{ej} As mentioned at the beginning of this section, we can recover the linear pullbacks $\PP^{n+m}\dashrightarrow\PP^n$ a special case of Corollary \ref{pb} by setting $X=\PP^{n+m}$ and $S=\{1,\dots,m\}$. 
\end{ej}

	\begin{ej} Let $\C$ be an irreducible component of $\FF_q(\PP^n, d)$ whose generic element satisfies the hypotheses of Theorem \ref{stab1} (for $q=1$) or Theorem \ref{stab2} (for $q\geq 2$) . Combining Corollary \ref{pb} with Example \ref{blowup2} we can conclude that there exists an irreducible component of $\FF_q(Bl_p(\PP^{n+1}), (d,0))$ such that its generic element is a linear pullback of an element in $\C$.
\end{ej}

\vspace{1cm}

$^1$ \small{Departamento de Matemática-IMAS, FCEyN, Universidad de Buenos Aires, Buenos Aires, Argentina. The author was fully supported by CONICET.\newline
\emph{E-mail address:} svelazquez@dm.uba.ar.}

\end{document}